\newcommand{\ssup}[1] {{{\scriptscriptstyle{({#1}})}}}
\def\1{{\mathchoice {1\mskip-4mu\mathrm l}      
{1\mskip-4mu\mathrm l}
{1\mskip-4.5mu\mathrm l} {1\mskip-5mu\mathrm l}}}
\DeclareFontFamily{U}{mathx}{\hyphenchar\font45}
\DeclareFontShape{U}{mathx}{m}{n}{
      <5> <6> <7> <8> <9> <10>
      <10.95> <12> <14.4> <17.28> <20.74> <24.88>
      mathx10
      }{}
\DeclareSymbolFont{mathx}{U}{mathx}{m}{n}
\DeclareMathAccent{\widecheck}{0}{mathx}{"71}
\newcommand{\floor}[1]{\left\lfloor #1 \right\rfloor}
\newcommand{\N}     {\mathbb{N}}
\newcommand{\var}{\mbox{var}}
\newcommand{\cov}{\mbox{cov}}
\newcommand{\II}[1]{#1\!#1}
\def\bds{\begin{displaystyle}}
\def\eds{\end{displaystyle}}
\theoremstyle{definition}
\newtheorem{Prop}{Proposition}
\newtheorem{lemma}{Lemma}
\newtheorem{Theorem}{Theorem}
\newtheorem{Remark}{Remark}
\numberwithin{equation}{section}
\numberwithin{Theorem}{section}
\numberwithin{lemma}{section}
\numberwithin{Prop}{section}
\numberwithin{Remark}{section}
\numberwithin{corollary}{section}
\numberwithin{example}{section}
\numberwithin{D}{section}
\numberwithin{condition}{section}
\newcommand{\Rmnum}[1]{\expandafter\@slowromancap\romannumeral #1@}
\begin{document}
\title{Invariance principle for biased Boostrap Random Walk.}
\author{A. Collevecchio, K. Hamza, Yunxuan Liu\\ \\School of Mathematical Science\\Monash University}

\date{}

\maketitle

\begin{abstract}
Our main goal is to study a class of processes whose increments are generated via a cellular automata rule.
Given the increments of a simple biased random walk, a new sequence of (dependent) Bernoulli random variables is produced. It is built, from the original sequence, according to a cellular automata rule. Equipped with these two sequences, we construct two more according to the same cellular automata rule. The construction is repeated a fixed number of times yielding an infinite array ($\{-K,\ldots,K\}\times\N$) of (dependent) Bernoulli random variables.
Taking partial sums of these sequences, we obtain a $(2K+1)$-dimensional process whose increments belong to the state space $\{-1,1\}^{2K+1}$.

The aim of the paper is to study the long term behaviour of this process. In particular, we establish transience/recurrence properties and prove an invariance principle.  The limiting behaviour of these processes depends strongly on the direction of the iteration, and exhibits few surprising features.
This work is motivated by an earlier investigation (see \cite{CHS15}), in which the starting sequence is symmetric, and by the related work \cite{Ferrari}.
\end{abstract}

\section{Introduction}
Our main goal is to study a class of processes whose increments are generated via cellular automata rule.
We consider a generalization of a model introduced in \cite{CHS15} which can be described as follows.
Consider a sequence of i.i.d. random variables taking values in $\{-1, 1\}$,  with mean $2p -1$, where $p \in (0,1)$. We place these values in a row of an infinite array. We label this row, row 0. The other rows are built recursively as follows. The $n$-th element of row $k$ is the partial product of the first $n$ elements of row $k-1$.
These ``downward'' iterations create an array that obeys the cellular automata rule that the $(k, n)$ entry is simply the product of the entries in $(k-1, n)$ and $(k, n-1)$, for $k\geq1, n\geq2$. The rule can be reversed to create, by ``upward'' iteration, rows of negative order.
This model is also akin to the three-dot system introduced by Ledrappier \cite{Led78} in that both satisfy the same three-dot identity
$$x_n(k)x_n(k-1)x_{n-1}(k)=1.$$
However the Bootstrap Random Walk (BRW) is restricted by design to the half-lattice as we impose the boundary condition that for any $k\in\mathbb{Z}$, $x_1(k)=x_1(0)$.

We emphasize that the randomness in this model is present only through row 0. In other words, given this row, the array is simply deterministic. In this regard, the result obtained in \cite{CHS15}, studying the case $p = 1/2$,  was quite striking, and  can be summarized as follows.  When $p = 1/2$, if we consider the  elements of the row $k$, with fixed $k\in\mathbb{Z}$, they are i.i.d. random variables taking values in $\{-1, 1\}$ with mean 0. Let $S_n^{\ssup k}$ be the sum of the first $n$ elements of row $k$. The process $\big(S_n^{\ssup k}\big)_n$, for each $k\in\mathbb{Z}$, is a simple symmetric random walk. Hence, properly re-scaled, it obeys a functional central limit theorem, i.e. weakly converges to Brownian motion. What could be surprising is that for any $K$, the process $S_n=(S_n^{\ssup{-K}},\ldots,S_n^{\ssup K})$, again properly re-scaled, converges to a $(2K+1)$-dimensional standard Brownian motion\footnote{We call standard Brownian motion, a multidimensional process whose components are independent Brownian motions.}.
In other words, the strong dependence  vanishes in the limit.

In this paper we generalize the above result to any $p\in(0,1)$. Here $S_n^{\ssup 1}$ is no longer a random walk and the question of whether or not a properly re-scaled $S_n^{\ssup 1}$ converges to a Brownian motion requires attention. It is remarkable that not only do we respond in the affirmative to this question but we do so collectively for $S_n^{\ssup{-K}},\ldots,S_n^{\ssup K}$. In fact, we reveal a dichotomy between the upward and downward processes. For the latter, we prove that the strong dependence between the components of $(S_n^{\ssup 0},\ldots,S_n^{\ssup K})$ still vanishes in the limit. In other words, even in this general setting a properly re-scaled $(S_n^{\ssup 0},\ldots,S_n^{\ssup K})$ converges to a $(K+1)$-dimensional standard Brownian motion. On the other hand, a limiting upward process retains some dependence beween the components of $(S_n^{\ssup{-K}},\ldots,S_n^{\ssup 0})$.  A properly re-scaled $(S_n^{\ssup{-K}},\ldots,S_n^{\ssup 0})$ converges to a correlated $(K+1)$-dimensional Brownian motion.

Furthermore, in the general setting of $p\neq1/2$, the recurrence analysis of $S_n$ is even more interesting as the drift, present in $(S_n^{\ssup 0})_n$, decays exponentially after only the first downward iteration.
In fact, although $(S_n^{\ssup 0})_n$ is a random walk with drift $2p -1$, the process $(S_n^{\ssup k})_n$ with  $k\geq1$, visits the origin infinitely often. However, this is not the case for the upward process whose components, $(S_n^{\ssup k})_n$, $k\leq-1$, are all transient.

In the original model introduced in \cite{CHS15}, the increments of $(S_n^{\ssup k})_n$ are allowed to take values in a finite set $\mathcal{U} = \{u_0, u_1, . . . , u_{q-1}\}\subset\mathbb{R}$ equipped with an operation $\bigotimes$ such that $(\mathcal{U},\bigotimes)$ is an Abelian group.  Here $q$ is assumed to be a prime number. The same generalisation can be undertaken here. However, many of the analytic expressions we rely on become significantly more cumbersome and significantly less intuitive. For this reason, we choose to present the results in the special case $\mathcal{U}=\{-1,1\}$ equipped with the usual multiplication. Different features of a similar model were studied in \cite{Ferrari}.

The paper is structured as follows. In Section \ref{Def:S}, we introduce the model, state the main results, give an account of the existing literature and describe the importance of our results within this context.
Section 3 sets the model within a Markov chain setting. It concludes with a study of the recurrence properties of $(S_n)_n$. Section 4 proves the invariance principle giving an explicit representation for the normalizing constants. The paper concludes with an Appendix that contains three combinatorial lemmas.

\section{Model Set-up and main results.}\label{Def:S}

\subsection{The model}

Denote by $\N$ the set of positive integers, and  set $\N_0 = \N \cup \{0\}$.
Consider the following model. Let $(\xi_i)_i$ be a sequence of independent random variables having the Bernoulli$(p)$ distribution on $\{-1,1\}$:
\begin{equation}\label{Bernoulli}
\rho(x) = p^{\frac{1+x}2}(1-p)^{\frac{1-x}2},
\end{equation}
where $p \in (0,1)$. We build an array $(\eta_{k,n})_{k\in\mathbb{Z},n\in\mathbb{N}}$ as follows. For $k\in\N_0,n\in\N$, we set
$$\eta_{0, n} =  \xi_n,\qquad\eta_{k+1,n} = \prod_{m=1}^n \eta_{k,m}\qquad\mbox{and}\qquad\eta_{-k,n}=\eta_{-k+1,n-1}\eta_{-k+1,n}.$$
\begin{figure}[h]
\centering
\includegraphics[width=\textwidth]{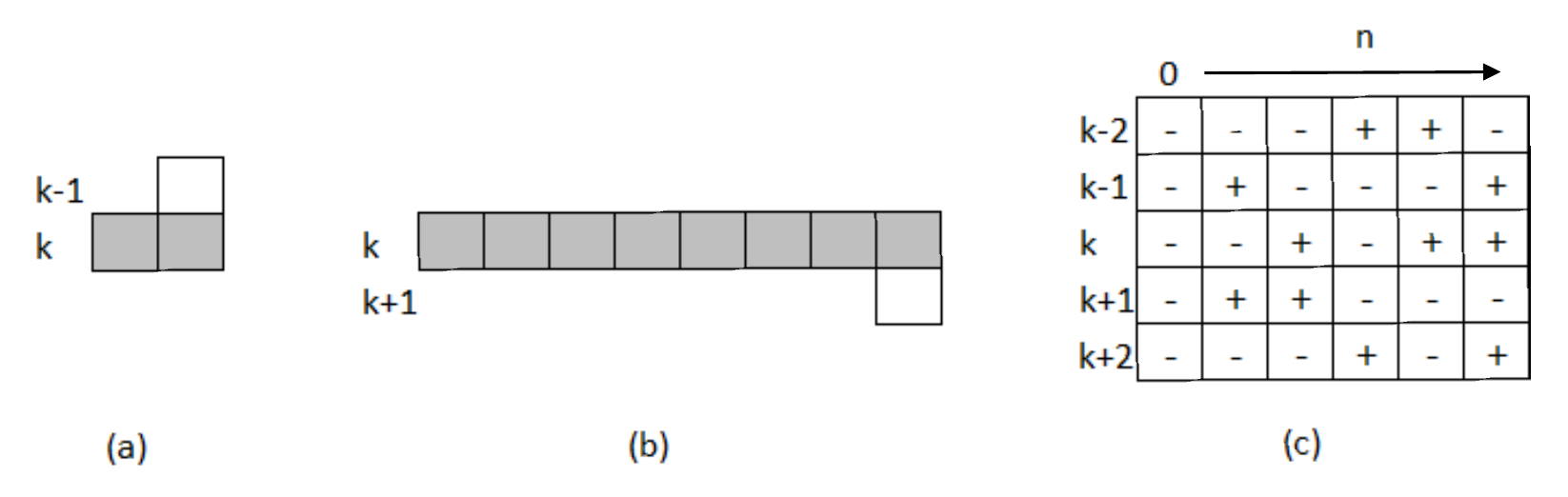}
\caption{(a) Upward process: the white cell is the product of the two grey cells immediately below and to the left of it.
(b) Downward process: the white cell is the product of all grey cells immediately above and to the left of it.
(c) $\eta_{k,n}$ array: ``-'' for $-1$ and ``+'' for $1$.}\label{3dot0}
\end{figure}
These rules are represented graphically in Figure \ref{3dot0}.
As seen in \cite{CHS15}, $\eta_{k,n}$ can then be written in terms of the original sequence $(\xi_n)_n$:
$$\eta_{k,n} = \prod_{m=1}^n\xi_{n-m+1}^{\nu_{k,m}},$$
where $(\nu_{k,n})_{k\in\mathbb{Z},n\in\mathbb{N}}$ is defined as the array such that, for any $k\in\mathbb{Z},n\in\mathbb{N}$,
$$\nu_{0,1}=1,\quad\nu_{0,n+1}=0,\quad\nu_{k,1}=1,\quad\nu_{k,n}= {n+k-2\choose n-1}=\frac{(n+k-2)\ldots k}{(n-1)!} \mod 2.$$

As already mentioned, unlike the symmetric case for which all rows play identical roles, in the non-symmetric setting ($p\neq1/2$), which we focus on in this paper, row 0 plays a unique role and the behaviours of the upward and downward processes are very different, in many ways, contrasted.

\subsection{The main results}

The main purpose of the paper is to describe the long term behaviour of the $(2K+1)$-dimensional process $S_n = (S_n^{\ssup{-K}},\ldots,S_n^{\ssup K})$, where $K \in \N$ is fixed throughout the paper.

We start with a recurrence analysis of the process $S_n$.

\begin{Theorem}\label{recS}
\begin{enumerate}
\item $d=1$. The process $(S_n^{\ssup k})_n$ visits 0 finitely many times, for any $k\in\II[-K,0\II]$\footnote{We write $\II[k,\ell\II]$ for the set $[k,\ell]\cap\mathbb{Z}$.}, and infinitely often, for any $k\in\II[1,K\II]$.
\item $d=2$. For any $k,\ell\in\II[1,K\II]$, the 2-dimensional process $\big((S_n^{\ssup k},S_n^{\ssup\ell})\big)_n$ visits the origin $(0,0)$ infinitely often.
\item $d\geq3$ ($d\leq K$). Any $d$-dimensional process projection of $(S_n^{\ssup 1},\ldots,S_n^{\ssup K})$ visits the origin finitely many times.
\end{enumerate}
\end{Theorem}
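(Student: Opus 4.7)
The three parts correspond to three regimes. Part~1 for $k\in\II[-K,0\II]$ is handled by a law-of-large-numbers argument; every other case exploits the finite-state Markov chain developed in Section~3, combined with a standard excursion decomposition.

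For $k\in\II[-K,0\II]$: every entry of row $k$ is a product of finitely many of the i.i.d.\ variables $\xi_i$, so $(\eta_{k,n})_n$ is (eventually) stationary and $m$-dependent with $\E[\eta_{k,n}]=(2p-1)^{c_k}$, where $c_k\ge 1$ is the number of $\xi_i$'s appearing in the product representation $\eta_{k,n}=\prod_m\xi_{n-m+1}^{\nu_{k,m}}$. Since $p\neq 1/2$, an $m$-dependent LLN yields $S_n^{\ssup k}/n\to(2p-1)^{c_k}\neq 0$ almost surely, so the process drifts to $\pm\infty$ and visits $0$ only finitely often.

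For $k\in\II[1,K\II]$ (the remaining cases of Parts~1, 2 and 3): Section~3 shows that $Z_n=(\eta_{1,n},\ldots,\eta_{K,n})$ is an irreducible aperiodic Markov chain on $\{-1,1\}^K$, with $Z_{n+1}=\xi_{n+1}\cdot T(Z_n)$ for the bijection $T(z)^k=\prod_{j=1}^k z^j$, and with the uniform law as stationary distribution. In particular $\E_\pi[\eta_{k,n}]=0$ for every $k\in\II[1,K\II]$. Fixing a reference state $z_0$ and writing $0=\tau_0<\tau_1<\cdots$ for the successive hitting times of $z_0$, the excursion vectors
$$\Delta_N=\Big(\sum_{m=\tau_{N-1}+1}^{\tau_N}\eta_{1,m},\ldots,\sum_{m=\tau_{N-1}+1}^{\tau_N}\eta_{K,m}\Big),\qquad N\ge 1,$$
are i.i.d., centred and with exponential moments. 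For any $d$-subset $1\le k_1<\cdots<k_d\le K$ the embedded process $\big(S_{\tau_N}^{\ssup{k_1}},\ldots,S_{\tau_N}^{\ssup{k_d}}\big)_N$ is therefore a genuine $d$-dimensional, mean-zero, i.i.d.\ random walk. Chung--Fuchs then yields recurrence at the origin for $d=1$ and $d=2$; for $d\ge 3$ the local central limit theorem gives $\mathbb{P}\big(S_{\tau_N}^{\ssup{k_1}}=\cdots=S_{\tau_N}^{\ssup{k_d}}=0\big)=O(N^{-d/2})$ and Borel--Cantelli delivers transience. Recurrence of the embedded walk at the origin passes to recurrence of $(S_n)_n$ trivially (the embedded times are a subsequence of the full times); transience transfers by summing the corresponding counts over the finitely many reference states $z_0$ in the joint chain $(Z_n,S_n)$. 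The case $k=\ell$ in Part~2 collapses to the $d=1$ statement.

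\textbf{Main obstacle.} The central technical point is strict positive definiteness of the $d\times d$ covariance matrix of $\Delta_N$ restricted to any coordinates $k_1,\ldots,k_d\in\II[1,K\II]$: without this the embedded walk would be supported on a lower-dimensional sublattice and neither the 2D Chung--Fuchs application nor the $d\ge 3$ local CLT rate could be used. This non-degeneracy is established by a combinatorial/symmetry analysis of the bijection $T$ and of the cycle structure of $(Z_n)_n$, ruling out every non-trivial linear relation $\sum_i\alpha_i\,\Delta_N^{\ssup{k_i}}\equiv 0$. Once this is in hand, the remaining ingredients (the Markov-chain setup, Chung--Fuchs, the local CLT, and the embedded-to-full transfer) are classical.
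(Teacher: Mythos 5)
Your overall strategy --- excursion decomposition at the successive return times to a fixed state of the finite chain, i.i.d.\ centred excursion sums, Chung--Fuchs for $d\le2$ and transience of a genuinely $d$-dimensional walk for $d\ge3$ --- is exactly the route the paper takes (its Proposition \ref{recg} and the ergodic identity \eqref{LLN:MC} play the role of your embedded walk and of your mean computation $\E[\Delta_N^{\ssup{k}}]=\E_{z_0}[\tau_1]\,\E_\pi[\eta_{k,\cdot}]$). Your two deviations are both legitimate and, if anything, slightly cleaner: the $m$-dependent LLN for the rows $k\in\II[-K,0\II]$ replaces the paper's observation that the embedded walk has non-zero drift $\E_e[\tau_1(e)]\mu(k)$, and your ``sum the visit counts over reference states'' transfer for transience avoids the harder direction of the paper's Proposition \ref{recg}. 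A small remark on $d=2$: non-degeneracy is not actually needed there, since a degenerate mean-zero planar lattice walk lives on a rank-one sublattice and is still recurrent by the one-dimensional Chung--Fuchs theorem; only $d\ge3$ genuinely requires it.

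The genuine gap is precisely the point you flag and then defer. For $d\ge3$ you must show that the excursion increments $\Delta_N$, projected onto any coordinates $k_1<\dots<k_d$ in $\II[1,K\II]$, are not supported on a subgroup of rank at most $2$; otherwise the embedded walk is a recurrent low-dimensional walk, the local CLT bound $O(N^{-d/2})$ is false, and the conclusion of part 3 fails. Asserting that this ``is established by a combinatorial/symmetry analysis of the bijection $T$'' is not a proof, and this is in fact the hardest single step in the whole circle of ideas. The paper devotes Propositions \ref{return1} and \ref{sigmapos} to it: using the representation $\sigma^2=\pi(e)\,\E_e\big[\big(\sum_{n=1}^{\tau(e)}\bar f(X_n)\big)^2\big]$ with $f(e)=\sum_k a_k e(k)$, it shows $\sigma^2>0$ for every $a\neq0$ by exhibiting, for each $k$, an explicit positive-probability excursion from the state $\mathbf{1}_{\star k}$ back to itself along which all innovations equal $1$, and then eliminating the coefficients $a_K,a_{K-1},\dots$ one at a time by comparing the resulting linear constraints. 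Until you supply this (or an equivalent non-degeneracy argument), your proof of part 3 is incomplete; parts 1 and 2 as you present them are sound.
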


Next, we state an invariance principle for $S_n$. To this end we introduce the following notations.

For $k,\ell\geq0$ and $n\in\N$, $\beta_{k,\ell} = {k\choose\ell}\mod 2$, $\theta(k)$ is the number of 1's in the binary representation of $k$, $2^{\kappa(n)}$ is the largest power of 2 that divides $n$, $B(k,\ell) = \#\{j\in\II[1,k\vee\ell\II]:\ \beta_{k,j}+\beta_{\ell,j}=1\}$\footnote{We use the convention that ${n\choose m}=0$ whenever $n<m$.}, $\omega_{k,n}= \#\{m\in\II[1,n\II]:\nu_{k,m}=1\}$ and
$$\mathcal{B}(k,\ell,n) = 2^{\theta(k)} + 2^{\theta(\ell)} - 2\sum_{j=0}^{k\wedge(-n+\ell)}\beta_{k,j}\beta_{\ell,n+j}.$$
As usual we adopt the convention that, for $m<n$, $\sum_{i=n}^mx(i)=0$ and $\prod_{i=n}^mx(i)=1$.

We establish in Lemma \ref{B} the identity
$$B(k,\ell) = 2^{\theta(k)}+2^{\theta(\ell)}-2\sum_{j=0}^{k\wedge\ell}\beta_{k,j}\beta_{\ell,j} = \mathcal{B}(k,\ell,0).$$

Let
$$\mu(k) = \left\{
\begin{array}{ll}
(2p-1)^{2^{\theta(-k)}} & k\leq0\\
0 & k\geq1
\end{array}
\right.$$
and
$$\Sigma(k,\ell) = \left\{
\begin{array}{ll}
\sum_{n=1}^{-\ell}(2p-1)^{\mathcal{B}(-k,-\ell,n)} + \sum_{n=1}^{-k}(2p-1)^{\mathcal{B}(-\ell,-k,n)} & \\
\quad +\ (2p-1)^{B(-k,-\ell)} - (-k-\ell+1)(2p-1)^{2^{\theta(-k)}+2^{\theta(-\ell)}} & k,\ell\leq0\\
1 + 2\sum^\infty_{n=1,\ k\leq2^{\kappa(n)}}(2p-1)^{\omega_{k,n}} & k=\ell\geq1\\
0 & \mbox{otherwise}
\end{array}
\right.$$

For any $k\in\mathbb{Z}$, $n \in \N$ and $t \in [0,1]$, define
$$S_n^{\ssup k}(t) = \sum_{m=1}^{\floor {tn}}\eta_{k,m},\quad\bar{S}^{\ssup k}_n(t):=\big(S^{\ssup k}_n(t)-\mu(k)\lfloor nt \rfloor\big)+\left(nt-\lfloor nt \rfloor\right)\big(\eta_{k,\lfloor nt \rfloor+1}-\mu(k)\big)$$
and
$$U_n(t) = \frac1{\sqrt{n}}\left(\overline{S}^{\ssup{-K}}_n(t),\ldots,\overline{S}^{\ssup K}_n(t)\right),$$
where $\lfloor x \rfloor$ is the integer part of $x$. We shall also write $\widecheck{U}_n(t)$ for the projection of $U_n(t)$ on the components $(0,\ldots,K)$:
$$\widecheck{U}_n(t) = \frac1{\sqrt{n}}\left(\overline{S}^{\ssup 0}_n(t),\ldots,\overline{S}^{\ssup K}_n(t)\right).$$

\begin{Theorem}\label{TH2}
$(U_n(t))_{t \in [0,1]}$ converges weakly to a $(2K+1)$-dimensional Brownian motion whose covariance matrix is given by the matrix $\Sigma(k,\ell)$. In particular, $(\widecheck{U}_n(t))_{t \in [0,1]}$ converges weakly to a $(K+1)$-dimensional standard Brownian motion.
\end{Theorem}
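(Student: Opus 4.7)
The plan is to establish the invariance principle via the two standard steps: convergence of the finite-dimensional distributions and tightness in $C([0,1],\mathbb{R}^{2K+1})$.

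\textbf{Moment calculations.} I would begin by using the product representation $\eta_{k,m}=\prod_{j=1}^m\xi_j^{\nu_{k,m-j+1}}$ together with the independence of the $\xi_j$'s and the identity $\xi_j^2=1$ to obtain, for any $k,\ell\in\mathbb{Z}$ and $m,n\in\mathbb{N}$,
$$E[\eta_{k,m}]=(2p-1)^{\omega_{k,m}},\qquad E[\eta_{k,m}\eta_{\ell,n}]=\prod_{j}(2p-1)^{(\nu_{k,m-j+1}+\nu_{\ell,n-j+1})\bmod 2}.$$
Summing these quantities over $m,n\le\lfloor tn\rfloor$ and invoking the identity of Lemma~\ref{B} and the combinatorial lemmas announced for the Appendix should yield
$$\tfrac1n\,\mathrm{Cov}\!\big(\bar{S}_n^{\ssup k}(s),\bar{S}_n^{\ssup\ell}(t)\big)\;\longrightarrow\;(s\wedge t)\,\Sigma(k,\ell),$$
with $\Sigma$ as in the theorem. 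In particular, this delivers the crucial off-diagonal cancellation for $k,\ell\ge1$ with $k\ne\ell$, which is what produces the standard Brownian limit for $\widecheck{U}_n$.

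\textbf{Finite-dimensional convergence.} By the Cram\'er--Wold device, it suffices to show that any linear combination $L_n=\sum_{i,k}a_{i,k}U_n^{\ssup k}(t_i)$ converges to the corresponding centered Gaussian. Because the $\eta_{k,m}$ take values in $\{-1,1\}$, I would proceed by the method of moments. The odd moments of $L_n$ tend to zero by the centering built into $\bar{S}_n^{\ssup k}$, while the even moments expand as
$$E[L_n^{2q}]=\frac1{n^q}\sum a_{i_1,k_1}\cdots a_{i_{2q},k_{2q}}\,E\!\left[\prod_{r=1}^{2q}\bar\eta_{k_r,m_r}\right],$$
with $\bar\eta_{k,m}:=\eta_{k,m}-E[\eta_{k,m}]$. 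The joint expectation factorises as a product over $j$ of powers of $(2p-1)$, and a pairing argument in the spirit of Wick's theorem should show that only the $(2q-1)!!$ perfect pairings of the indices contribute at leading order $n^q$, yielding Isserlis's formula and hence Gaussianity with the covariance computed in the previous step.

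\textbf{Tightness.} I would use Billingsley's fourth-moment criterion, namely
$$E\!\left[\big|\bar{S}_n^{\ssup k}(t)-\bar{S}_n^{\ssup k}(s)\big|^4\right]\le C\,n^2(t-s)^2\qquad\big(|t-s|\ge 1/n\big),$$
which follows from the same product-over-$j$ expansion applied to quadruples $m_1,\ldots,m_4\in(\lfloor ns\rfloor,\lfloor nt\rfloor]$, since contributions of unpaired $\bar\eta$'s are smaller by at least one factor of $n^{-1}$.

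\textbf{Main obstacle.} The principal difficulty lies with the downward components $k\ge1$. In contrast with the upward case, where each $\eta_{-k,m}$ is a product of only $2^{\theta(k)}$ of the $\xi_j$'s and the variance reduces to a local count, the downward entries $\eta_{k,m}$ depend on \emph{all} of $\xi_1,\ldots,\xi_m$, producing a long-range dependence that rules out any off-the-shelf mixing or martingale CLT. The reason that the covariance nevertheless remains linear in $n$ --- and that the cross terms between different downward levels vanish, giving the standard limit for $\widecheck{U}_n$ --- is a delicate cancellation driven by the binary structure of $\nu_{k,m}={m+k-2\choose m-1}\bmod 2$, encoded in the condition $k\le 2^{\kappa(n)}$ that appears in the variance formula. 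Exploiting Kummer's theorem to control the sparse set of indices $j$ for which $\nu_{k,j}=1$, and feeding the result into the combinatorial identities of the Appendix, is where the bulk of the technical work will sit.
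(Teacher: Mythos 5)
Your overall architecture (Cram\'er--Wold, finite-dimensional convergence, tightness) is sound, and your covariance computations are essentially those carried out in the paper's Proposition \ref{variance2}. But your route diverges from the paper's at the crucial point, and the divergence rests on a misconception. You assert that the long-range dependence of the downward entries ``rules out any off-the-shelf mixing or martingale CLT.'' This is exactly backwards: the paper's entire strategy is to observe that the \emph{vector} process $X_n=(\eta_{-K,n},\ldots,\eta_{K,n})$ is a finite-state, irreducible, aperiodic Markov chain (Propositions \ref{irreducible} and \ref{prop1.1}), hence uniformly ergodic, so that the functional CLT for Markov chains (Theorem 17.4.4 and Section 17.4.3 of \cite{MT93}) applies off the shelf to $\bar S_n(t)=\sum_{m\le nt}\bar f(X_m)$ for every linear functional $f(e)=\sum_k a_ke(k)$; Cram\'er--Wold then assembles the vector statement. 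The apparent long memory of $\eta_{k,m}$ in the $\xi_j$'s is an artifact of looking at a single row; once the rows are bundled into the vector chain the dependence is one-step Markov, and the general theory delivers both finite-dimensional convergence and tightness for free. The remaining work in the paper is then (a) the explicit identification $\sigma^2=\sum_{k,\ell}\Sigma(k,\ell)a_ka_\ell$ via the lag covariances $\cov_\pi(f(X_0),f(X_n))$ --- this is where your moment calculations and the Appendix lemmas live --- and (b) the strict positivity $\sigma^2>0$ for $a\ne0$ (Proposition \ref{sigmapos}), proved by a regeneration-cycle argument with carefully chosen excursions, which your proposal does not address at all.

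Taken on its own terms, your direct method-of-moments route also has two concrete gaps. First, the Wick-type claim that only the $(2q-1)!!$ perfect pairings contribute at order $n^q$ is precisely the hard part of the argument and is asserted rather than proved; for the downward rows the joint expectation $E\bigl[\prod_r\bar\eta_{k_r,m_r}\bigr]$ does not factor over pairs, and one needs genuine cluster or cumulant estimates built on the summability of $(2p-1)^{\omega_{k,n}}$ along $\{n:\ k\le 2^{\kappa(n)}\}$. Second, your statement that the odd moments vanish ``by the centering built into $\bar S_n^{\ssup k}$'' is inaccurate for $k\ge1$: there the centering constant is $\mu(k)=0$, not $E[\eta_{k,m}]=(2p-1)^{\omega_{k,m}}$ (a quantity you yourself compute), so the summands are not individually centered and you must separately show that $\sum_{m\le n}(2p-1)^{\omega_{k,m}}=o(\sqrt n)$. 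Neither gap is fatal --- both can presumably be repaired with the combinatorics you defer to --- but as written the proposal replaces the one genuinely delicate step of the paper's proof (positivity of $\sigma^2$) with a different, and at least as delicate, unproven step.
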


\subsection{Literature Review} \label{LitRev}

\subsubsection{Elephant Random Walk}
The class of processes that we are studying in this paper were introduced and studied in \cite{CHS15}. As we mentioned before,  each of the $(S^{\ssup k}_n)_n$  with $k \ge 1$,  is an example of process with long memory. The  process $S_n^{\ssup 1}$ has many similarities, in terms of construction, with the Elephant Walk, which can be described as follows. Elephant random walks captured a lot of attention in the literature in recent years, as they are long-memory processes connected with random recursive trees, and exhibit anomalous diffusion properties. This family of processes have a parameter $p$ and are built as follows. The first row of the array is built in the same way the BRW is assembled, i.e. $\mathcal{E}_{1, i} = \xi_i$ where the $\xi$ are i.i.d. in $\{-1, 1\}$ with mean $2p -1$.
Let $(U_k)_k$ be a sequence  of independent  random variables, where $U_k$ is discrete uniform over the set $\II[1,k\II]$. Fix $\mathcal{E}_{2, 1} = \mathcal{E}_{1, 1}$ and suppose defined $\mathcal{E}_{2, n}$ with $n \in \N$. We define $\mathcal{E}_{2, n+1}$ as $\xi_{n+1}\mathcal{E}_{2, U_n}$. Define $E_n = \sum_{j=1}^n \mathcal{E}_{2, j}$. The process $(E_n)_n$ is known in the literature as an Elephant Random Walk and was introduced  by Sch\"utz and Trimper \cite{ST04}. It was studied, among others,  by Baur and Bertoin \cite{BB16} where central limit theorems are proved for a certain range of values of $p$, and a convergence to non-Gaussian limits for other values. See also Gonzales, Gonz\'alez-Navarrete and Lambert \cite{Go16}.

\subsubsection{Dependent percolation}
Here we focus on the array $(\eta_{\ell,n})_{\ell,n}$. If we color blue the entries that are equal to $-1$ and orange the ones that equal $+1$ we observe a typical structure of cellular automata. The orange sizes are organized as triangles, we call islands, while the blue cells percolate in the manner of a river until they either hit an orange triangle or reach the extremities of a finite array. When we change the values of $p$ the triangles have the tendency of becoming bigger as $p$ approaches 1 or 0.  Theorem \ref{TH2} implies that the difference between the number of $+1$ and $-1$  in any horizontal strip which does not include the first row, and has $K\times n$ sites, re-scales as $\sqrt n$ and converges to a normal distribution with mean 0. Percolation properties of the unrestricted (on the entire plane) three-dot system are studied by Quint \cite{Qui08}.

\section{A Markov chain set-up}
Consider the state space $E=\{-1,1\}^{2K+1}$ on which we define a process $X$ as follows.
For each vector $e \in E$, we denote by $e(-K),\ldots,e(-1),e(0),e(1),\ldots,e(K)$ its coordinates, and by $e^+$ and $e^-$ be the two ``forward'' vectors satisfying, for $k\in\II[1,K\II]$,
\begin{equation}\label{efwd}
e^\pm(0) = \pm1,\quad e^{\pm}(k) = e^{\pm}(k-1)e(k)\quad\mbox{and}\quad e^{\pm}(-k)  = e^{\pm}(-k+1)e(-k+1).
\end{equation}
We shall also need the ``backward'' vectors $^+e$ and $^-e$ satisfying
\begin{equation}\label{ebwd}
^{\pm}e(-K) = \pm1\quad\mbox{and}\quad ^{\pm}e(k)  = e(k-1)e(k),
\end{equation}
$\ k\in\II[-K+1,K\II]$. Then the transition probabilities defined, for any $n \in \N$ and any $e \in E$, by
\begin{equation}\label{1.0.1}
\mathbb{P}(X_n=e^\pm| X_{n-1}=e) = \rho(\pm1),
\end{equation}
specify a time homogeneous Markov Chain $X=(X_n)_n$ on $E$. We shall denote its transition probability matrix $P$ and, as usual, index the probability measure $\mathbb{P}$ and the corresponding expectation $\mathbb{E}$ with its initial distribution.

We observe that $X_n(0)$ is independent of $\sigma(X_0,\ldots,X_{n-1})$ and has the Bernoulli distribution $\rho$. Indeed,
$$\mathbb{P}(X_n(0)=\pm1|X_{n-1}=e) = \mathbb{P}(X_n=e^\pm| X_{n-1}=e) = \rho(\pm1).$$
This enables us to see the process $(\eta_n)_n$ as the Markov chain $(X_n)_n$ started at $\mathbf{1}$, and the sequence $(\xi_n)_n$ as the innovation $(X_n(0))_n$ in the chain $(X_n)_n$.

In fact, for any integers $k\in\II[0,K\II]$ and $n\in\N$,
\begin{eqnarray}
X_n(-k) & = & X_{n-1}(-k+1)X_n(-k+1)\ =\ \prod_{j=0}^{k\wedge n}X_{n-j}(k\wedge n-k)^{\beta_{k\wedge n,j}}\label{xirep}\\
X_n(k) & = & X_0(k) \prod_{m=1}^nX_m(k-1)\ =\ \left(\prod_{\ell=0}^{k-1}X_0(k-\ell)^{\alpha_{\ell,n}}\right)\left(\prod_{m=1}^nX_{n-m+1}(0)^{\nu_{k,m}}\right)\nonumber
\end{eqnarray}
where $\bds\nu_{k,n} = {n+k-2 \choose k-1} \mod 2\eds$, $\alpha_{k,n} = \nu_{k+1,n}$ and $\bds\beta_{k,\ell} = {k \choose \ell} \mod 2\eds$.\footnote{$\alpha_{k,n}=\left(\!{n\choose k}\!\right)$ is the number of ways to choose a multiset of cardinality $k$ from a set of cardinality $n$.}

In particular, $X_n(-k)$ is a function of $(X_0(-k),\ldots,X_0(0),\ldots,X_n(0))$. The dependence can be elaborated on depending on the relative values of $k$ and $n$. If $k<n$, then the dependence is only on $X_{n-k}(0),\ldots,X_n(0)$. While, if $k\geq n$, then the dependence is on $(X_0(-k+n),\ldots,X_0(0),X_1(0),\ldots,X_n(0))$. Similarly, $X_n(k)$ is a function of $(X_0(k),\ldots,X_0(1),X_1(0),\ldots,X_n(0))$, but not of $X_0(0)$.

Two other processes are of interest and will be discussed in the sequel. They are the two projections on the coordinates $(-K,\ldots,0)$ and the coordinates $(0,\ldots,K)$. We call the former $(\widehat{X}_n)_n$ and the latter $(\widecheck{X}_n)_n$, and retain the original numbering of the components. In other words, the components of $\widehat{X}_n$ are numbered $-K,\ldots,0$, and those of $\widecheck{X}_n$ as $0,\ldots,K$. In all, for $k\in\II[0,K\II]$,
$$\widehat{X}_n(-k) = X_n(-k)\quad\mbox{and}\quad \widecheck{X}_n(k) = X_n(k).$$
It is easy to see that $(\widehat{X}_n)_n$ is a Markov chain with the following transition probabilities. Let, for $\widehat{e}\in E'=\{-1,1\}^{K+1}$, $\widehat{e}^\pm$ be the projection of $e^\pm$ (see \eqref{efwd}) on the coordinates $(-K,\ldots,0)$. Then
$$\mathbb{P}(\widehat{X}_n=\widehat{e}^\pm| \widehat{X}_{n-1}=\widehat{e}) = \rho(\pm1).$$
Similarly, $(\widecheck{X}_n)_n$ is a Markov chain with transition probabilities:
$$\mathbb{P}(\widecheck{X}_n=\widecheck{e}^\pm| \widecheck{X}_{n-1}=\widecheck{e}) = \rho(\pm1),$$
where $\widecheck{e}\in E'$ and $\widecheck{e}^\pm$ is the projection of $e^\pm$ on the coordinates $(0,\ldots,K)$.

\begin{Prop}\label{irreducible}
For any $e_0,e_{2K+1}\in E$ there exists a unique chain $(e_1,\ldots,e_{2K})$ such that
\begin{eqnarray*}
\mathbb{P}(X_{2K+1}=e_{2K+1}|X_0=e_0) & = & \mathbb{P}(X_1=e_1,\ldots,X_{2K+1}=e_{2K+1}|X_0=e_0)\\
& = & \prod_{k=1}^{2K+1}\rho(e_k(0)).
\end{eqnarray*}
And for any $\widehat{e}_{K+1}\in E'$, there exists a unique sequence $(x_1,\ldots,x_K)$ such that for any $\widehat{e}_0\in E'$,
$$\mathbb{P}(\widehat{X}_{K+1}=\widehat{e}_{K+1}|X_0=\widehat{e}_0) = \rho(\widehat{e}_{K+1}(0))\prod_{k=1}^K\rho(x_k).$$
In other words, $(\widehat{X}_n)_n$ reaches stationarity after $K+1$ steps.
\end{Prop}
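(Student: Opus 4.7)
The plan is to reduce both assertions to the bijectivity of explicit maps from innovation sequences into the state space. As already noted in the excerpt, $X_n$ is a deterministic function of $X_0$ and the innovations $\xi_k:=X_k(0)$, $k\geq1$, so a trajectory of length $m$ starting from a given $X_0$ is encoded by $(\xi_1,\ldots,\xi_m)\in\{-1,1\}^m$; uniqueness of the chain $(e_1,\ldots,e_{2K})$ is then just injectivity (hence, by cardinality, bijectivity) of such a map. I linearise everything by identifying $\{-1,1\}$ multiplicatively with $\mathbb F_2$ additively.

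For Part 2, I specialise the first line of \eqref{xirep} to $n=K+1$ and $k\in\II[0,K\II]$, obtaining
$$\widehat X_{K+1}(-k)=\prod_{j=0}^{k}X_{K+1-j}(0)^{\beta_{k,j}}.$$
The crucial point is that $k\leq K<K+1$, so the right-hand side depends only on the $K+1$ innovations $\xi_1,\ldots,\xi_{K+1}$ and not at all on $\widehat X_0$. In $\mathbb F_2$ this is the linear system with matrix $(\beta_{k,j})_{0\leq j\leq k\leq K}$, which is lower triangular with unit diagonal ($\beta_{k,k}=1$), hence invertible. The induced bijection $\{-1,1\}^{K+1}\to E'$ produces, for every target $\widehat e_{K+1}$, a unique tuple $(x_1,\ldots,x_K,\widehat e_{K+1}(0))$ realising the transition; since this tuple is independent of $\widehat e_0$, both the stated transition-probability formula and stationarity after $K+1$ steps follow.

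For Part 1 the negative coordinates are handled in exactly the same way: for $k\in\II[0,K\II]$, the identity $X_{2K+1}(-k)=\prod_{j=0}^{k}X_{2K+1-j}(0)^{\beta_{k,j}}$ (same triangular system, shifted in time) uniquely determines the last $K+1$ innovations $\xi_{K+1},\ldots,\xi_{2K+1}$ from $\bigl(e_{2K+1}(-K),\ldots,e_{2K+1}(0)\bigr)$, independently of $e_0$. To recover the remaining innovations $\xi_1,\ldots,\xi_K$, I turn to the positive coordinates via the second line of \eqref{xirep},
$$X_{2K+1}(k)=\Big(\prod_{\ell=0}^{k-1}X_0(k-\ell)^{\alpha_{\ell,2K+1}}\Big)\Big(\prod_{m=1}^{2K+1}X_{2K+2-m}(0)^{\nu_{k,m}}\Big),$$
split the inner product into already-determined innovations ($m\in\II[1,K+1\II]$) and unknown ones ($m\in\II[K+2,2K+1\II]$), absorb the $X_0$-factor and the known innovations into a right-hand side, and substitute $i=2K+2-m$. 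The resulting $K$ equations form an $\mathbb F_2$-linear system in $\xi_1,\ldots,\xi_K$ with coefficient matrix
$$M_{k,i}=\nu_{k,\,2K+2-i}=\binom{2K-i+k}{k-1}\bmod 2,\qquad k,i\in\II[1,K\II].$$
Provided $M$ is invertible over $\mathbb F_2$, the first $K$ innovations are uniquely determined, the full trajectory is unique, and its probability $\prod_{k=1}^{2K+1}\rho(\xi_k)$ equals the stated $(2K+1)$-step transition probability from $e_0$ to $e_{2K+1}$.

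The main obstacle is this invertibility of $M$. The Part 2 matrix $(\beta_{k,j})$ is triangular by inspection, so nothing has to be proved; $M$, by contrast, has no such visible structure --- small computations show that its diagonal is not even identically $1$ --- and establishing $\det M\not\equiv 0\pmod{2}$ requires a Lucas-type analysis of the binomial coefficients $\binom{2K-i+k}{k-1}$ through their binary expansions. This is the only genuinely combinatorial ingredient of the proof, and I would isolate it as a separate lemma and defer its proof to the combinatorial appendix.
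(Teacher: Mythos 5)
Your reduction of Part 2 to the triangular $\mathbb{F}_2$-system with matrix $(\beta_{k,j})$ is correct and complete, and your reduction of Part 1 to the invertibility of $M_{k,i}=\binom{2K-i+k}{k-1}\bmod 2$ is a faithful reformulation of what must be proved. But that invertibility \emph{is} the whole content of the first assertion: once the last $K+1$ innovations are read off the negative coordinates of $e_{2K+1}$, the claim that every state is reachable from every state by a unique $(2K+1)$-step trajectory is exactly the statement that $M$ acts bijectively on $\mathbb{F}_2^K$. You have deferred precisely the one nontrivial step to an unproven lemma, and, as you note yourself, a direct Lucas-type attack is not routine (for $K=3$ one has $M_{2,2}=\binom{4}{1}\equiv0$, so the matrix is not triangular in the given basis). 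As written, the proof therefore has a genuine hole.

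The paper closes this hole without any determinant computation, by exploiting the local three-dot identity $X_n(i)X_n(i-1)X_{n-1}(i)=1$: any two cells of such a triple determine the third. Starting from the column $e_{2K+1}$, one fills in, cell by cell, the half-array of size $(2K+1)\times(2K+1)$ above and to the left of it, obtaining in particular the anti-diagonal $(e_1(K),e_2(K-1),\ldots,e_{2K+1}(-K))$; then, starting from $e_0$ together with that anti-diagonal, one fills in the remaining cells, including all of row $0$, i.e.\ the innovations $e_1(0),\ldots,e_{2K+1}(0)$. This cell-by-cell propagation is an explicit solving order for your linear system --- in effect a triangularization of $M$ after a change of basis --- and is what you should substitute for the missing lemma (or else prove the invertibility of $M$ by mimicking that propagation rather than by analysing $\det M$ through binary expansions).
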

\begin{proof}
The proof relies on Figure \ref{3dot}.

\begin{figure}[h]
\centering
\includegraphics[width=.75\textwidth]{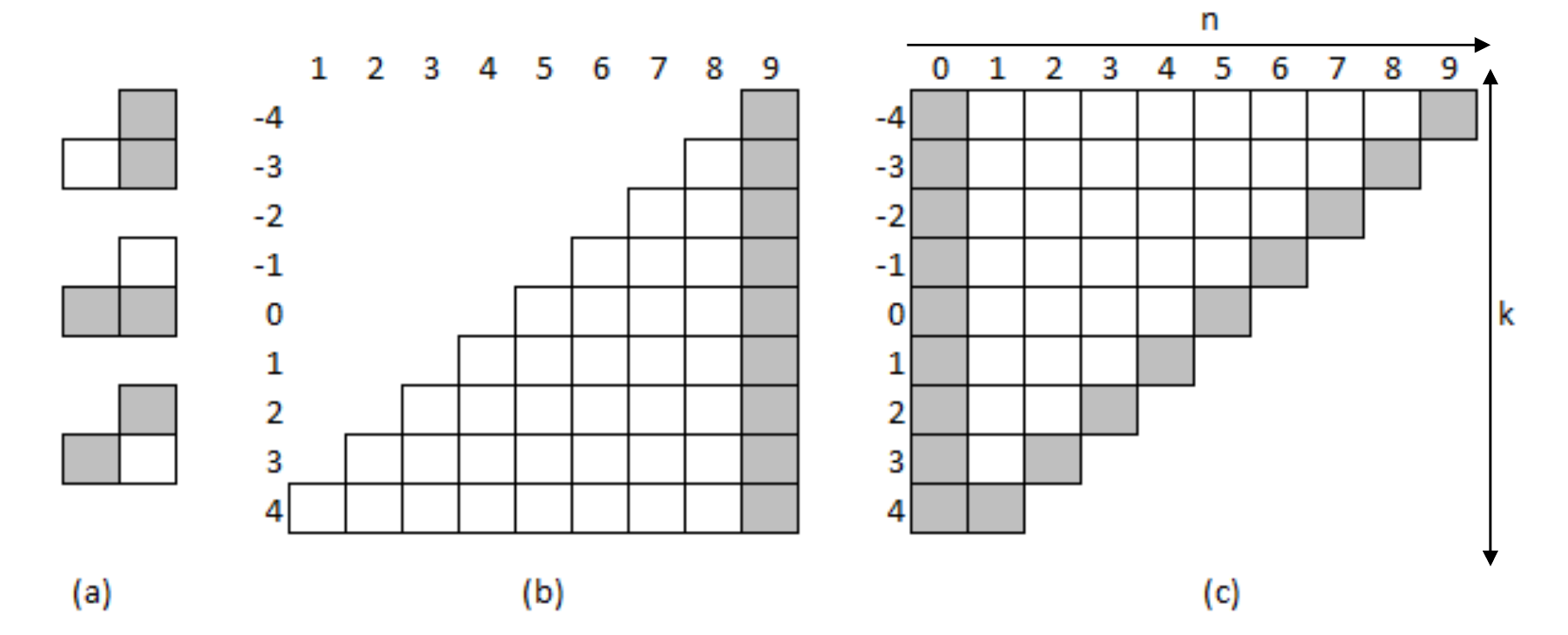}{\centering}
\caption{The three-dot system ($K=4$). The white cells are uniquely determined from the knowledge of the grey cells.}\label{3dot}
\end{figure}

(a) describes the three-point system:
\begin{equation}
X_n(i) X_n(i-1) X_{n-1}(i) =1
\end{equation}
and illustrates the fact that setting any two cells fixes the value of the third.

Using (a), (b) shows how, for a given value of $e_{2K+1}$, we can uniquely complete the half-array of size $(2K+1)\times(2K+1)$.

Again using (a), (c) shows that, given $e_0$ and $(e_1(K),e_2(K-1),\ldots,e_{2K+1}(-K))$, the remaining cells, and in particular those on row 0, are uniquely determined. Therefore, one can find a unique sequence $(e_1,\ldots,e_{2K})$ such that
\begin{eqnarray*}
\mathbb{P}(X_0=e_0,X_{2K+1}=e_{2K+1}) & = & \mathbb{P}(X_0=e_0,X_1(0)=e_1(0),\ldots,X_{2K+1}(0)=e_{2K+1}(0))\\
& = & \left(\prod_{k=1}^{2K+1}\rho(e_k(0))\right)\mathbb{P}(X_0=e_0).
\end{eqnarray*}
The second part of the statement is shown in a similar way. Note that $\mathbb{P}(\widehat{X}_{K+1}=\widehat{e}_{K+1}|X_0=\widehat{e}_0)$ does not depend on $\widehat{e}_0$.
\end{proof}

\begin{Remark}
From \cite{CHS15}, we know that, if $p=\frac{1}{2}$, then the Markov chain reaches its stationary distribution after exactly $2K+1$ steps regardless of the initial state. In other words, when $p = 1/2$, $(X_n)_n$ is a $(2K+1)$-dependent process that mixes perfectly after $2K+1$ steps.

If $p\neq\frac{1}{2}$, this is no longer the case. In fact, while the knowledge of $X_{2K+1}$ is equivalent to that of $X_1(K),\ldots,X_{2K+1}(K)$ and does not involve $X_0$, to recover the vectors $X_1,\ldots,X_{2K}$ in their entirety, one needs $X_0$ (as well as $X_{2K+1}$). Therefore, the sequence $e_1(0),\ldots,e_{2K+1}(0)$ in Proposition \ref{irreducible}, will in general depend on $e_0$.

In fact, while $(\widecheck{X}_n)_n$ fails to mix perfectly after a fixed number of steps, $(\widehat{X}_n)_n$ reaches stationarity after precisely $K+1$ steps. This highlights again the difference between the case $p=1/2$, where downward and upward processes are indistinguishable, and the case $p\neq1/2$ for which the same two processes exhibit very different behaviours.
\end{Remark}

\begin{Prop}\label{prop1.1}
The Markov chains $(X_n)_n$, $(\widehat{X}_n)_n$ and $(\widecheck{X}_n)_n$ are irreducible and aperiodic.

Moreover, their unique stationary distributions $\pi$, $\widehat{\pi}$ and $\widecheck{\pi}$ are: for any $e\in E$ and $\widehat{e},\widecheck{e}\in E'$,
\begin{eqnarray}
\pi(e) & = & 2^{-K}\prod_{k=0}^K\rho\left(\prod_{\ell=0}^ke(-\ell)^{\beta_{k,\ell}}\right)\label{eq:pi}\\
\widehat{\pi}(\widehat{e}) & = & \prod_{k=0}^K\rho\left(\prod_{\ell=0}^k\widehat{e}(-\ell)^{\beta_{k,\ell}}\right)\label{eq:pi-up}\\
\widecheck{\pi}(\widecheck{e}) & = & 2^{-K}\rho\left(\widecheck{e}(0)\right)\label{eq:pi-down}
\end{eqnarray}
\end{Prop}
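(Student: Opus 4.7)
The plan is to establish irreducibility and aperiodicity using Proposition \ref{irreducible}, then verify each claimed stationary distribution by direct substitution into the balance equations. Since the three state spaces are finite, irreducibility plus aperiodicity plus one invariant measure will give both existence and uniqueness.

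Irreducibility is immediate from Proposition \ref{irreducible}: it supplies a positive-probability path of length $2K+1$ between any two states in $E$ and of length $K+1$ between any two states in $E'$, and the irreducibility of $(\widecheck{X}_n)$ follows by projection of a path of $(X_n)$. For aperiodicity, I would observe that $\mathbf{1}^+ = \mathbf{1}$ by an immediate induction on the recursions \eqref{efwd}: $\mathbf{1}^+(0) = 1$, and inductively $\mathbf{1}^+(\pm k)$ is a product of ones. Hence $P(\mathbf{1},\mathbf{1}) = \rho(1) > 0$, so $\mathbf{1}$ has period $1$, and irreducibility extends this to every state; the same self-loop argument works for both projections.

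For $\widecheck{\pi}$, given $\widecheck{e} \in E'$, the preimages $\widecheck{e}'$ under the forward rule matching $\widecheck{e}(0)$ are parameterized by $\widecheck{e}'(0) \in \{-1,+1\}$ alone, since inverting the downward rule forces $\widecheck{e}'(k) = \widecheck{e}(k-1)\widecheck{e}(k)$ for $k \in [1,K]$. Summing the two contributions gives $\widecheck{\pi}(\widecheck{e}'_+) + \widecheck{\pi}(\widecheck{e}'_-) = 2^{-K}$, and multiplication by the innovation probability $\rho(\widecheck{e}(0))$ recovers $\widecheck{\pi}(\widecheck{e})$.

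The main obstacle is the verification of $\pi$, because it combines the upward and downward inversions simultaneously. Given $e \in E$, inverting both rules in \eqref{efwd} forces $e'(k)$ for $k \in [-K+1,K]$ while leaving $e'(-K) = \sigma \in \{-1,+1\}$ free, yielding two preimages $e'_\pm$. The key combinatorial step is Pascal's identity modulo $2$, $\beta_{k,\ell-1} + \beta_{k,\ell} \equiv \beta_{k+1,\ell} \pmod 2$, which I would apply to obtain the telescoping identity
$$\prod_{\ell=0}^k e'(-\ell)^{\beta_{k,\ell}} = \prod_{\ell=0}^{k+1} e(-\ell)^{\beta_{k+1,\ell}} \qquad (0 \leq k < K),$$
aligning the first $K$ factors of $\pi(e'_\pm)$ with the last $K$ factors of $\pi(e)/\rho(e(0))$. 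The remaining $k=K$ factor takes the form $\rho(\sigma A)$ with $A$ independent of $\sigma$, so summing over $\sigma$ gives $\rho(A)+\rho(-A) = 1$, exactly accounting for the weight $2^{-K}$. The formula for $\widehat{\pi}$ then follows by restricting this computation to the upward coordinates, or equivalently by using Proposition \ref{irreducible} to write $\widehat{X}_{K+1}(-k) = \prod_{j=0}^k \xi_{K+1-j}^{\beta_{k,j}}$ and inverting via the identity $\sum_\ell \binom{k}{\ell}\binom{\ell}{j} = \binom{k}{j}2^{k-j} \equiv \delta_{kj} \pmod 2$, which expresses that $[\beta_{k,\ell}]$ is involutory over $\mathbb{F}_2$.
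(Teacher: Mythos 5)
Your proposal is correct and follows essentially the same route as the paper: irreducibility from Proposition \ref{irreducible}, aperiodicity via the self-loop at $\mathbf{1}$, and verification of invariance of $\pi$ by summing over the two backward preimages, using Pascal's identity modulo 2 to telescope $\prod_{\ell=0}^k e'(-\ell)^{\beta_{k,\ell}}$ into $\prod_{\ell=0}^{k+1}e(-\ell)^{\beta_{k+1,\ell}}$ and absorbing the free sign in the $k=K$ factor. The only departures are cosmetic: you spell out the $\widecheck{\pi}$ computation and offer an alternative derivation of $\widehat{\pi}$ via the involutory Pascal matrix over $\mathbb{F}_2$, where the paper simply notes that the same reasoning extends to the two projected chains.
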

\begin{proof}
Irreducibility is a direct consequence of Proposition \ref{irreducible}. Aperiodicity now follows from the fact that if $e=\mathbf{1}$, then $e^+ = e$ and $P(e,e)=p>0$. Similarly for $\widehat{\pi}$ and $\widecheck{\pi}$.

Next we show that if $X_0$ has distribution $\pi$ as given in \eqref{eq:pi}, then so does $X_1$. The same reasoning extends to $(\widehat{X}_n)_n$ and $(\widecheck{X}_n)_n$.

Let $e\in E$. The two vectors $^-e$ and $^+e$ are the only vectors in $E$ such that $P(^{\pm}e,e)>0$.
Now, for $k<K$,
\begin{eqnarray*}
\prod_{\ell=0}^k{^{\pm}e}(-\ell)^{\beta_{k,\ell}} & = & \prod_{\ell=0}^k\big(e(-\ell)e(-\ell-1)\big)^{\beta_{k,\ell}}\ =\ \left(\prod_{\ell=0}^ke(-\ell)^{\beta_{k,\ell}}\right)\left(\prod_{\ell=1}^{k+1}e(-\ell)^{\beta_{k,\ell-1}}\right)\\
& = & \prod_{\ell=0}^{k+1}e(-\ell)^{\beta_{k+1,\ell}}
\end{eqnarray*}
and, for $k=K$,
$$\prod_{\ell=0}^K{^{\pm}e}(-\ell)^{\beta_{K,\ell}} = \pm\prod_{\ell=0}^{K-1}\big(e(-\ell)e(-\ell-1)\big)^{\beta_{K,\ell}},$$
so that
$$\rho\left(\prod_{\ell=0}^K{^-e}(-\ell)^{\beta_{K,\ell}}\right)+\rho\left(\prod_{\ell=0}^K{^+e}(-\ell)^{\beta_{K,\ell}}\right)=1.$$
It follows that
\begin{eqnarray*}
\mathbb{P}_\pi(X_1 = e) & = & \mathbb{P}_\pi\big(X_1 = e, X_0 = {^-e}\big) + \mathbb{P}_\pi\big(X_1 = e, X_0 = {^+e}\big)\\
& = & \mathbb{P}_\pi\big(X_0 = {^-e},X_1(0) = e(0)\big) + \mathbb{P}_\pi\big(X_0 = {^+e},X_1(0) = e(0)\big)\\
& = & \mathbb{P}_\pi\big(X_0 = {^-e}\big)\mathbb{P}_\pi\big(X_1(0) = e(0)\big) + \mathbb{P}_\pi\big(X_0 = {^+e}\big)\mathbb{P}_\pi\big(X_1(0) = e(0)\big)\\
& = & 2^{-K}\rho(e(0))\prod_{k=0}^{K-1}\rho\left(\prod_{\ell=0}^{k+1}e(-\ell)^{\beta_{k+1,\ell}}\right)\\
& = & 2^{-K}\rho(e(0))\prod_{k=1}^K\rho\left(\prod_{\ell=0}^ke(-\ell)^{\beta_{k,\ell}}\right)\ =\ \pi(e).
\end{eqnarray*}
\end{proof}

In the next proposition we present results on the marginal distributions of $\pi$.
Let $\pi_k$ be the $k$-marginal of $\pi$; i.e. the projection of $\pi$ on component $k$ ($k\in\II[-K,K\II]$):
$$\pi_k(x) = \sum_{\substack{e\in E\\ e(k)=x}}\pi(e).$$
The following proposition describes these marginal distributions and how the corresponding random variables relate to each other. It makes use of the celebrated Lucas Theorem \cite{Lucas}.

\begin{Theorem}[Lucas]
A binomial coefficient ${n\choose m}$ is divisible by a prime $p$ if and only if at least
one of the base $p$ digits of $m$ is greater than the corresponding digit of $n$.
\end{Theorem}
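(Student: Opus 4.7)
The plan is to prove Lucas's theorem by the classical generating function argument based on the Frobenius (``freshman's dream'') identity $(1+x)^p \equiv 1+x^p \pmod p$, which is immediate from the fact that $p$ divides $\binom{p}{j}$ for $1\le j\le p-1$. First, I would write $n$ and $m$ in base $p$ as $n=\sum_{i\ge 0} n_i p^i$ and $m=\sum_{i\ge 0} m_i p^i$ with $0\le n_i,m_i \le p-1$, and (by induction on $i$) upgrade the freshman's dream to $(1+x)^{p^i}\equiv 1+x^{p^i}\pmod p$.

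Next I would compute $(1+x)^n$ modulo $p$ in two ways. On the one hand, the coefficient of $x^m$ in $(1+x)^n$ is $\binom{n}{m}$. On the other hand,
$$
(1+x)^n = \prod_{i\ge 0}\bigl((1+x)^{p^i}\bigr)^{n_i} \equiv \prod_{i\ge 0}(1+x^{p^i})^{n_i} \pmod{p}.
$$
Expanding the right-hand side, the coefficient of $x^m$ is a sum over tuples $(j_i)_{i\ge 0}$ with $\sum_i j_i p^i = m$ of $\prod_i \binom{n_i}{j_i}$. The crucial observation is that, because each $n_i\le p-1$, any $j_i$ that contributes must also satisfy $j_i\le p-1$, and hence by uniqueness of the base-$p$ expansion of $m$ the only surviving tuple is $j_i=m_i$. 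Comparing the two coefficient expressions yields the fundamental congruence
$$
\binom{n}{m} \equiv \prod_{i\ge 0}\binom{n_i}{m_i} \pmod{p}.
$$

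From this the statement in the theorem is immediate: the product on the right is nonzero modulo $p$ if and only if each factor $\binom{n_i}{m_i}$ is nonzero, which (since $0\le n_i,m_i\le p-1$) holds if and only if $m_i\le n_i$ for every $i$. Equivalently, $p\mid\binom{n}{m}$ if and only if some base-$p$ digit of $m$ strictly exceeds the corresponding digit of $n$. The only mildly subtle step is verifying the uniqueness-of-expansion argument used to collapse the convolution sum to a single term; the rest is mechanical once the Frobenius identity is in hand. Since the theorem is a textbook result, I would expect the authors to simply cite it without proof and proceed to apply it (most naturally with $p=2$) to evaluate parities of the binomial coefficients appearing in the definitions of $\beta_{k,\ell}$, $\nu_{k,n}$, and $\alpha_{k,n}$.
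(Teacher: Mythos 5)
Your proof is correct, and the one step you flag as ``mildly subtle'' is indeed fine: in the convolution sum each contributing tuple must have $j_i\le n_i\le p-1$, so $\sum_i j_i p^i=m$ forces $j_i=m_i$ by uniqueness of the base-$p$ expansion, and the congruence $\binom{n}{m}\equiv\prod_i\binom{n_i}{m_i}\pmod p$ follows. There is, however, nothing in the paper to compare it against: the authors state Lucas's theorem purely as a cited classical result (Lucas, 1878) and give no proof, exactly as you predicted, using it only with $p=2$ to evaluate the parities $\beta_{k,\ell}$, $\nu_{k,n}$ and $\alpha_{k,n}$ (e.g.\ in Lemmas \ref{beta} and \ref{alpha1}). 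So your generating-function argument via $(1+x)^{p^i}\equiv 1+x^{p^i}\pmod p$ is a correct, self-contained substitute for the citation; it buys completeness at the cost of a short digression the authors chose to avoid.
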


\begin{Prop}\label{pidist}
Let $\Xi$ be a random vector whose distribution is $\pi$.
Then
\begin{enumerate}
\item $\widehat{\Xi},\Xi(1),\ldots,\Xi(K)$ are independent and, $\widehat{\Xi}$ has distribution $\widehat{\pi}$, $\Xi(0)$ has distribution $\rho$ and $\Xi(1),\ldots,\Xi(K)$ are symmetric Bernoulli random variables;
\item for $1\leq k\leq K$ and $x_0,\ldots,x_{k-1}\in\{-1,1\}$,
$$\mathbb{P}\left(\Xi(-k)=1|\Xi(-k+1)=x_{k-1},\ldots,\Xi(0)=x_0\right) = \rho\left(\prod_{\ell=0}^{k-1}x_{\ell}^{\beta_{k,\ell}}\right);$$
\item for $0\leq k\leq K$,
$$\mathbb{E}\big[\Xi(-k)\big] = (2p-1)^{2^{\theta(k)}} = \mu(-k);$$
\item for $0\leq k\leq\ell\leq K$,
$$\mathbb{E}\big[\Xi(-k)\Xi(-\ell)\big] = (2p-1)^{B(k,\ell)},$$
and, for $p\neq1/2$,
$$\cov(\Xi(-k),\Xi(-\ell)) \geq 4p(1-p)(2p-1)^{B(k,\ell)} > 0.$$
\end{enumerate}
\end{Prop}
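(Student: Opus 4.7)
The plan is to read parts (1) and (2) off the explicit product forms of $\pi$ and $\widehat{\pi}$ given in \eqref{eq:pi}--\eqref{eq:pi-down}, and to derive the moment identities in (3) and (4) from the representation of $X_n(-k)$ in \eqref{xirep} together with Lucas' theorem. The only substantive combinatorial input beyond Lucas will be Lemma \ref{B}, used at the end to control the exponent appearing in the covariance estimate.

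For parts (1) and (2), the starting observation is that \eqref{eq:pi} gives $\pi(e)=2^{-K}\widehat{\pi}(\widehat{e})$, so $\widehat{\Xi}$ is independent of $(\Xi(1),\ldots,\Xi(K))$, the latter being uniform on $\{-1,1\}^K$. The formula \eqref{eq:pi-up} further factors as $\prod_{j=0}^{K}f_j(\widehat{e}(-j),\ldots,\widehat{e}(0))$, where $f_j(\widehat{e}(-j),\ldots,\widehat{e}(0))=\rho(\prod_{\ell=0}^{j}\widehat{e}(-\ell)^{\beta_{j,\ell}})$; because $\beta_{j,j}=1$, each $f_j$ sums to $1$ over $\widehat{e}(-j)\in\{-1,1\}$. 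Summing $\widehat{\pi}$ over $\widehat{e}(-K),\ldots,\widehat{e}(-1)$ therefore leaves only the factor $\rho(\widehat{e}(0))$, proving the marginal claim in (1). More generally, summing out $\widehat{e}(-K),\ldots,\widehat{e}(-k-1)$ kills the factors with index $j>k$; dividing by the law of $(\Xi(-k+1),\ldots,\Xi(0))$ (obtained by a further sum over $\widehat{e}(-k)$) leaves $f_k$ evaluated at $\widehat{e}(-k)=1$, which is the conditional probability claimed in (2).

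For parts (3) and (4), Proposition \ref{irreducible} ensures that $\widehat{X}_n$ is stationary for $n\geq K+1$, so $\Xi(-k)$ has the law of $X_{K+1}(-k)$. The representation \eqref{xirep} with $n=K+1\geq k$ reduces to
$$X_n(-k)=\prod_{j=0}^{k}X_{n-j}(0)^{\beta_{k,j}},$$
where the innovations $X_m(0)$ are i.i.d.\ of law $\rho$. Using $\mathbb{E}[X_m(0)^\beta]=(2p-1)^\beta$ for $\beta\in\{0,1\}$ yields
$$\mathbb{E}[\Xi(-k)]=(2p-1)^{\sum_{j=0}^{k}\beta_{k,j}}=(2p-1)^{2^{\theta(k)}},$$
the last identity being Lucas' theorem, which counts the number of odd binomial coefficients $\binom{k}{j}$ with $0\le j\le k$ as $2^{\theta(k)}$. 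For two indices, multiplying and reducing exponents mod $2$ (permissible because $X_m(0)^2=1$) gives
$$\mathbb{E}[\Xi(-k)\Xi(-\ell)]=(2p-1)^{\#\{j\in\II[0,k\vee\ell\II]:\,\beta_{k,j}+\beta_{\ell,j}=1\}}=(2p-1)^{B(k,\ell)}.$$

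Finally, for the covariance estimate, write
$$\cov(\Xi(-k),\Xi(-\ell))=(2p-1)^{B(k,\ell)}\bigl[1-(2p-1)^{m}\bigr],\qquad m:=2^{\theta(k)}+2^{\theta(\ell)}-B(k,\ell).$$
Lemma \ref{B} identifies $m$ with $2\sum_{j=0}^{k\wedge\ell}\beta_{k,j}\beta_{\ell,j}$, a positive even integer of size at least $2$; hence $(2p-1)^m=|2p-1|^m\le(2p-1)^2$, and therefore $1-(2p-1)^m\ge 4p(1-p)$, which yields the announced bound. The whole argument is essentially bookkeeping once the reduction to i.i.d.\ innovation coordinates in \eqref{xirep} is in hand; the only delicate step is recognising the two combinatorial exponents correctly, which Lucas' theorem and Lemma \ref{B} accomplish.
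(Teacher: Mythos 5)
Your proof is correct and follows essentially the same route as the paper: parts (1)--(2) by summing out components in the product form of $\pi$ and $\widehat{\pi}$, and parts (3)--(4) via stationarity of $\widehat{X}_{K+1}$ (Proposition \ref{irreducible}), the representation \eqref{xirep} with i.i.d.\ innovations, Lucas' theorem, and Lemma \ref{B}. You in fact supply slightly more detail than the paper does on the final covariance lower bound, where the paper stops at the identity $\mathbb{E}[\Xi(-k)\Xi(-\ell)]=(2p-1)^{B(k,\ell)}$.
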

\begin{Remark}
In particular, if $k$ is a power of 2, then $\theta(k)=1$ and
$$\mathbb{E}\big[\Xi(-k)\big] = 2\mathbb{E}\big[\rho(\Xi(0))\big]-1 = (2p-1)^2.$$
\end{Remark}
\begin{proof}
\begin{enumerate}
\item This is a direct consequence of \eqref{eq:pi}.
\item We prove the second statement by repeating the following argument started at $K$.
\begin{eqnarray*}
\lefteqn{\mathbb{P}(\Xi(-K+1)=x_{K-1},\ldots,\Xi(0)=x_0)}\\
& = & \sum_{x_K=\pm1}\prod_{\ell=0}^K\rho\left(\prod_{j=0}^{\ell}x_j^{\beta_{\ell,j}}\right)\ =\ \prod_{\ell=0}^{K-1}\rho\left(\prod_{j=0}^{\ell}x_j^{\beta_{\ell,j}}\right)\sum_{x_K=\pm1}\rho\left(x_K\prod_{j=0}^{K-1}x_j^{\beta_{K,j}}\right),
\end{eqnarray*}
where the last sum takes the form $\rho(y)+\rho(-y)$ which equates to 1.
\item To prove the last two statements, we use Proposition \ref{irreducible} and more specifically the fact that $(\widehat{X}_n)_n$ mixes perfectly after $K+1$ steps so that $\widehat{X}_{K+1}$ has distribution $\widehat{\pi}$. Using \eqref{xirep}, we can write
$$\mathbb{E}\big[\Xi(-k)\big] = \mathbb{E}\big[\widehat{X}_{K+1}(-k)\big] = \mathbb{E}\left[\prod_{j=0}^k\widehat{X}_{K+1-j}(0)^{\beta_{k,j}}\right] = \prod_{j=0}^k\mathbb{E}\left[\widehat{X}_{K+1-j}(0)^{\beta_{k,j}}\right].$$
Now
$$\mathbb{E}\left[\widehat{X}_{K+1-j}(0)^{\beta_{k,j}}\right] = \left\{
\begin{array}{ll}
1 & \beta_{k,j}=0\\
2p-1 & \beta_{k,j}=1
\end{array}\right.$$
Using Lucas Theorem, we know that exactly $2^{\theta(k)}$ $j$'s are such that $\beta_{k,j}=1$. See Lemma \ref{beta} of the Appendix. The result follows.
\item Again using \eqref{xirep} and Lemma \ref{B} of the Appendix, we get
\begin{eqnarray*}
\mathbb{E}\big[\Xi(-k)\Xi(-\ell)\big] & = & \mathbb{E}\big[\widehat{X}_{K+1}(-k)\widehat{X}_{K+1}(-\ell)\big]\\
& = & \mathbb{E}\left[\left(\prod_{j=0}^k\widehat{X}_{K+1-j}(0)^{\beta_{k,j}}\right)\left(\prod_{j=0}^\ell\widehat{X}_{K+1-j}(0)^{\beta_{\ell,j}}\right)\right]\\
& = & \prod_{j=0}^{k\vee\ell}\mathbb{E}\left[\widehat{X}_{K+1-j}(0)^{\beta_{k,j}+\beta_{\ell,j}}\right]\ =\ (2p-1)^{B(k,\ell)}.
\end{eqnarray*}
\end{enumerate}
\end{proof}

We conclude the section with a proof of Theorem \ref{recS} which states that, in terms of recurrence and transience, the downward process $(X_n(1),\ldots,X_n(K))$ behaves like a classical $K$-dimensional random walk. To this end we recall some facts for (finite-state) Markov chains.

For $e\in E$, we write $\tau_m(e)$ for the successive hitting times of state $e$: $\tau_0(e)=0$ and $\tau_m(e)=\inf\{k>\tau_{m-1}(e):X_k=e\}$. Then, $(\tau_m(e)-\tau_{m-1}(e))_m$ is a sequence of independent and identically distributed random variables with finite exponential moments. And more generally for any function $g$, the random variables
$$Y_m = \sum_{k=\tau_{m-1}(e)+1}^{\tau_m(e)}g(X_k)$$
are independent and identically distributed and have (at least) a finite mean. It now follows, by application of the classical Law of Large Numbers for i.i.d. sequences, that
$$\frac1n\sum_{k=1}^{\tau_n(e)}g(X_k) = \frac1n\sum_{m=1}^n\sum_{k=\tau_{m-1}(e)+1}^{\tau_m(e)}g(X_k) = \frac1n\sum_{m=1}^nY_m \xrightarrow{\mbox{\footnotesize{ a.s. }}} \mathbb{E}_e\left[\sum_{k=1}^{\tau_1(e)}g(X_k)\right].$$
Writing $N_{\tau_n(e)}(u)$ for the number of visits to $u$ by time $\tau_n(e)$ (that is the sum of $Y_1,\ldots,Y_n$ when the function $g$ is the indicator function $1_{\{u\}}$), we also get that
$$\frac1n\sum_{k=1}^{\tau_n(e)}g(X_k) = \frac1n\sum_{u\in E}N_{\tau_n(e)}(u)g(u) \xrightarrow{\mbox{\footnotesize{ a.s. }}} \mathbb{E}_e[\tau_1(e)]\sum_{u\in E}\pi(e)g(u) = \mathbb{E}_e[\tau_1(e)]\mathbb{E}[g(\Xi)],$$
where $\Xi$ is a random variable with distribution $\pi$. In total,
\begin{equation}\label{LLN:MC}
\frac1n\sum_{k=1}^{\tau_n(e)}g(X_k)\xrightarrow{\mbox{\footnotesize{ a.s. }}}\mathbb{E}_e\left[\sum_{n=1}^{\tau_1(e)}g(X_n)\right] = \mathbb{E}_e[\tau_1(e)]\mathbb{E}[g(\Xi)].
\end{equation}

\begin{Prop}\label{recg}
Suppose $g$ takes values in $\mathbb{R}^d$. Then the process $Z_n=\sum_{m=1}^ng(X_m)$, with $Z_0=0$, visits any neighbourhood of 0 infinitely often if and only if so does the process $Z_{\tau_n(e)}$.
\end{Prop}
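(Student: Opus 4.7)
The implication ``$Z_{\tau_n(e)}$ visits every neighbourhood of $0$ infinitely often $\Rightarrow$ so does $Z_n$'' is immediate since $(Z_{\tau_n(e)})_n$ is a subsequence of $(Z_n)_n$. I focus on the converse: assume $(Z_n)$ visits every ball $B_\eps$ around $0$ infinitely often, and show the same for $W_m := Z_{\tau_m(e)}$. The strong Markov property at $\tau_1(e)$ makes the excursion increments $Y_m := \sum_{k=\tau_{m-1}(e)+1}^{\tau_m(e)} g(X_k)$ i.i.d., so $(W_m)_m$ is an i.i.d.\ random walk on $\mathbb{R}^d$. A preliminary remark: combining \eqref{LLN:MC} with the LLN for the i.i.d.\ excursion lengths yields $Z_n/n \to \mathbb{E}[g(\Xi)]$ a.s., and the hypothesis forces $\mathbb{E}[g(\Xi)] = 0$.

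The first substantive step is to isolate a deterministic state $x^* \in E$ at which the recurrence ``concentrates''. For $\eps > 0$ and $x \in E$, consider $A_{x,\eps} := \{\|Z_n\| < \eps,\ X_n = x \text{ i.o.}\}$. A finite permutation of the i.i.d.\ excursions leaves $\tau_N(e)$ and $W_N$ invariant, hence leaves the chain and $Z$ unchanged after time $\tau_N(e)$; so $A_{x,\eps}$ lies in the exchangeable $\sigma$-field, and Hewitt-Savage yields $\mathbb{P}(A_{x,\eps}) \in \{0,1\}$. Since $\bigcup_{x \in E} A_{x,\eps} = \{Z_n \in B_\eps \text{ i.o.}\}$ has probability one and $|E| < \infty$, some $x_\eps$ satisfies $\mathbb{P}(A_{x_\eps,\eps}) = 1$; applying this to $\eps = 1/k$ and pigeonholing in the finite set $E$ extracts a single $x^* \in E$ with $\mathbb{P}(A_{x^*,\eps}) = 1$ for every $\eps > 0$.

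I then transport this recurrence to explicit visits of $e$. By the irreducibility in Proposition \ref{irreducible}, fix a deterministic path $\mathbf{y} = (y_0 = x^*, y_1, \ldots, y_\ell = e)$ of positive probability $p^* := \mathbb{P}_{x^*}(X_1 = y_1, \ldots, X_\ell = y_\ell)$, and set $\delta := \sum_{i=1}^\ell g(y_i)$. Thin the a.s.\ infinite sequence of times with $\|Z_n\| < \eps$ and $X_n = x^*$ to stopping times $\widetilde T_k$ spaced at least $\ell$ apart. The strong Markov property makes the events ``the chain follows $\mathbf{y}$ in the next $\ell$ steps'' conditionally independent with probability $p^*$ each, so the second Borel-Cantelli lemma (in its conditional form) produces infinitely many successes. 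On each success, $X_{\widetilde T_k + \ell} = e$ and $Z_{\widetilde T_k + \ell} = Z_{\widetilde T_k} + \delta \in B_\eps(\delta)$, which gives $W_{m_k} \in B_\eps(\delta)$ along some $m_k \uparrow \infty$. Since $\eps > 0$ was arbitrary, $W_m$ visits every neighbourhood of $\delta$ infinitely often.

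To finish I invoke the classical dichotomy for i.i.d.\ random walks on $\mathbb{R}^d$: either $\|W_m\| \to \infty$ a.s.\ (transient), or there is a closed additive subgroup $G \ni 0$ of $\mathbb{R}^d$ such that $W_m$ visits every neighbourhood of every point of $G$ infinitely often (recurrent). Since $W_m$ visits every neighbourhood of $\delta$ infinitely often, we are in the recurrent case, and in particular $W_m$ also visits every neighbourhood of $0$ infinitely often. The main obstacle is the second paragraph: isolating a single deterministic state $x^*$ via Hewitt-Savage and a pigeonhole/diagonal argument over $\eps$ requires some care; the remaining steps are largely routine applications of the strong Markov property, Borel-Cantelli, and the standard random-walk dichotomy.
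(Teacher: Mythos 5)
Your proof is correct, but it takes a noticeably heavier route than the paper's. The paper argues the nontrivial direction by contradiction using only two ingredients: the increments of $Z_n$ are bounded, so $\|Z_{n'}-Z_n\|\le M$ whenever $n'-n\le 2K+1$, and, by Proposition \ref{irreducible}, from every state the chain reaches a prescribed state in exactly $2K+1$ steps on an event of probability bounded below. If $Z_n$ entered $V_\varepsilon$ infinitely often while $Z_{\tau_n(e)}$ eventually left $V_{2M}$ for good, each visit of $Z_n$ to $V_\varepsilon$ would launch an attempt, of uniformly positive probability, to hit the regeneration state within $2K+1$ steps while $Z$ stays in $V_{M+\varepsilon}\subset V_{2M}$ --- a contradiction as soon as one attempt succeeds. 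Your conditional Borel--Cantelli step is precisely the rigorous form of the paper's ``with positive probability'' assertion, so in that respect your write-up is more careful than the original. What you add beyond the paper is the Hewitt--Savage localisation at a deterministic state $x^*$ and the explicit appeal to the recurrence/transience dichotomy for i.i.d. walks. The dichotomy is genuinely needed by both arguments to upgrade ``$Z_{\tau_n(e)}$ visits a fixed bounded set i.o.'' to ``$Z_{\tau_n(e)}$ visits every neighbourhood of $0$ i.o.''; the paper leaves it implicit. The Hewitt--Savage step, by contrast, is avoidable: since $E$ is finite, you can launch a fixed-length positive-probability path to $e$ from whatever (random) state the chain occupies at the recurrence times, at the cost of landing $W_m$ in a fixed enlarged ball rather than in $B_\varepsilon(\delta)$, and the dichotomy finishes the argument just the same. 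Two small points to tidy: (i) for $A_{x,\varepsilon}$ to be an exchangeable event of the i.i.d. excursions you should work under $\mathbb{P}_e$, since otherwise $Z_n$ carries an additive contribution from the pre-$\tau_1(e)$ segment that is not a function of the excursion sequence; (ii) the preliminary observation that $\mathbb{E}[g(\Xi)]=0$ is never used in the rest of your argument.
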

\begin{proof}
The necessity is trivial. To show sufficiency, we first observe that, since the chain takes finitely many values, so do the increments of the process $Z_n$. It follows that $\max_{n\leq2K+1}\|Z_n\|\leq (2K+1)\max_{e\in E}\|g(e)\|$. Call this bound $M$.

Next, we fix a state, say $\mathbf{1}$, and call upon Proposition \ref{irreducible} to show that, uniformly in the initial state, $\tau_1(\mathbf{1})\leq2K+1$ on an event of positive probability.

We reason by contradiction and assume that $Z_n$ visits an $\varepsilon$-ball of 0, $V_\varepsilon$, infinitely often but that $Z_{\tau_n(e)}$ only visits the $2M$-ball of 0, $V_{2M}$, finitely many times. We choose $\varepsilon<M$. Then after the last visit to $V_{2M}$, the process $Z_n$ must return to $V_\varepsilon$, and from there, in no more than $(2K+1)$ steps, and with positive probability, the chain returns to $\mathbf{1}$. At that time, $\tau$, we have the contradicting statements that $\|Z_\tau\|\leq M+\varepsilon$ (this is the farthest that the chain can travel on the aforementioned event of positive probability) and $\|Z_\tau\|>2M$ ($Z_{\tau_n(e)}$ has exhausted its visits to $V_{2M}$).
\end{proof}

%
\subsection*{Proof of Theorem \ref{recS}}
The proof immediately follows from Proposition \ref{recg} and Equation \eqref{LLN:MC}.

\begin{enumerate}
\item $d=1$. Let $g(e)=e(k)$. Then $\mathbb{E}_e[Y_1]=\mathbb{E}_e[\tau_1(e)]\mu(k)$. If $k\in\II[-K,0\II]$, then the biased random walk $Z_{\tau_n(e)}=\sum_{m=1}^nY_m$ is transient (visits a neighbourhood of 0 finitely many times), from which we deduce that $S_n^{\ssup k}=Z_n$ itself is transient. If $k\in\II[1,K\II]$, then $\mathbb{E}[Y_1]=0$ and, by the Chung-Fucks Theorem (see for example \cite{Du10}), the random walk $Z_{\tau_n(e)}$ is recurrent. It follows from Proposition \ref{recg} that $S_n^{\ssup k}$ visits 0 infinitely often.
\item $d=2$. Let $g(e)=(e(k),e(\ell))$, $k,\ell\in\II[1,K\II]$. Here again $\mathbb{E}[Y_1]=0$ and, by the Chung-Fucks Theorem, the 2-dimensional random walk $Z_{\tau_n(e)}$ is recurrent, and, by Proposition \ref{recg}, so is $(S_n^{\ssup k},S_n^{\ssup \ell})$.
\item $d\geq3$. Let $g$ be the projection on a given $d$-dimensional space. Then, the transience of $Z_n$ follows from the transience of the $d$-dimensional random walk $Z_{\tau_n(e)}=\sum_{m=1}^nY_m$.
\end{enumerate}

\section{The invariance principle}

To show that $U_n$ converges weakly to a (correlated) $(2K+1)$-dimensional Brownian motion, we show that for any $a=(a_{-K},\ldots,a_K)\in\mathbb{R}^{2K+1}$, $\sum_{k=-K}^Ka_k\overline{S}^{\ssup k}_n(t)$ converges to a (non-)standard Brownian motion.

The argument is standard, follows from the classical theory of the `stability' of Markov chains, as is for example described in \cite{MT93}, and is briefly described below.

Fix such $a$ and let, for $e\in E$,
$$f(e)=\sum^K_{k=-K}a_ke (k)\quad\mbox{and}\quad\bar{f}(e)=f(e)-\mathbb{E}_\pi[f(X_0)] = \sum_{k=-K}^0a_k\big(e(k)-\mu(k)\big) + \sum_{k=1}^Ka_ke(k).$$
Now, define the stochastic process $\big(S_n(t)\big)_{t\in[0,1]}$ as
$$S_n(t) = \sum_{m=1}^{\lfloor nt \rfloor}\bar{f}(X_m) = \sum_{k=-K}^0a_k\big(S_n^{(k)}(t)-\mu(k)\lfloor nt \rfloor\big) + \sum_{k=1}^Ka_kS_n^{(k)}(t)$$
and consider the linear interpolation
$$\bar{S}_n(t):=S_n(t)+\left(nt-\lfloor nt \rfloor\right)\bar{f}\big(X_{\lfloor nt \rfloor+1}\big).$$
$\bar{S}_n(t)$ satisfies the following functional central limit theorem.

A direct application of Theorem 17.4.4 and Section 17.4.3 of \cite{MT93} yields the following result.
\begin{Theorem}
Let
\begin{equation}
\sigma^2=\var_\pi(f(X_0)) + 2\sum_{n=1}^\infty\cov_\pi\big(f(X_0),f(X_n)\big).\label{v}
\end{equation}
If $\sigma^2>0$, then, for any initial point $e\in E$,
$$\frac{1}{\sigma\sqrt{n}}\bar{S}_n(t)\xrightarrow{\text{ d }}(W_t)_{t\in[0,1]},$$
where $(W_t)_{t\in[0,1]}$ is a standard Brownian motion.
\end{Theorem}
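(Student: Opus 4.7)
The plan is to invoke the functional central limit theorem for additive functionals of Markov chains as given by Theorem 17.4.4 and Section 17.4.3 of \cite{MT93}, and reduce the problem to verifying its hypotheses. Three ingredients are required: (i) the chain $(X_n)_n$ is positive Harris recurrent with invariant distribution $\pi$; (ii) the centered observable $\bar f$ lies in $L^2(\pi)$; and (iii) the asymptotic variance series in \eqref{v} converges absolutely. All three are immediate consequences of the finiteness of $E=\{-1,1\}^{2K+1}$ combined with Propositions \ref{irreducible} and \ref{prop1.1}.

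For (i), the chain is irreducible and aperiodic on a finite state space, hence uniformly ergodic with unique invariant distribution $\pi$. For (ii), the function $f$ is bounded on $E$ by $\sum_{k=-K}^K|a_k|(1+|\mu(k)|)$, so $\bar f$ is bounded and lies in every $L^p(\pi)$. For (iii), uniform ergodicity furnishes constants $C>0$ and $\lambda\in(0,1)$ such that $\sup_{e\in E}\|P^n(e,\cdot)-\pi\|_{\mathrm{TV}}\leq C\lambda^n$; a standard estimate for bounded centered observables then gives $|\cov_\pi(f(X_0),f(X_n))|\leq C'\lambda^n\|\bar f\|_\infty^2$, so the series defining $\sigma^2$ converges absolutely. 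With these hypotheses in hand, Theorem 17.4.4 of \cite{MT93} delivers the invariance principle for the piecewise-constant partial-sum process $t\mapsto\sigma^{-1}n^{-1/2}S_n(t)$, starting from any $e\in E$ since the chain equilibrates uniformly in finitely many steps (Proposition \ref{irreducible}).

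It remains to pass from the piecewise-constant process to the linearly interpolated $\bar S_n$. The extra term $(nt-\lfloor nt\rfloor)\bar f(X_{\lfloor nt\rfloor+1})$ is bounded in absolute value by $\|\bar f\|_\infty<\infty$, so after division by $\sigma\sqrt n$ it tends to $0$ uniformly in $t\in[0,1]$; the two processes therefore share the same weak limit. The only mildly delicate point in the whole argument is the geometric covariance bound used in (iii), but on a finite state space this is automatic: Doeblin's condition holds after finitely many steps for any irreducible aperiodic finite chain, yielding geometric ergodicity and, in turn, both the absolute convergence of $\sigma^2$ and the moment bounds implicit in the tightness component of Theorem 17.4.4.
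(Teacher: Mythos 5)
Your proposal is correct and follows exactly the route the paper takes: the paper simply states that the result is ``a direct application of Theorem 17.4.4 and Section 17.4.3 of \cite{MT93}'', and you invoke the same theorem. The only difference is that you explicitly verify its hypotheses (uniform ergodicity from irreducibility and aperiodicity on the finite state space $E$, boundedness of $\bar f$, geometric decay of covariances) and justify the passage from the partial-sum process to its linear interpolation, details the paper leaves implicit.
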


The next step if to check that $\sigma^2>0$. To this end we call upon the following representation of $\sigma^2$ (see Section 17.4.3 of \cite{MT93}). Writing, for simplitcity, $\tau(e)$ for $\tau_1(e)$,
\begin{equation}
\sigma^2 = \pi(e)\mathbb{E}_e\left[\left(\sum_{n=1}^{\tau(e)}\bar{f}(X_n)\right)^2\right].\label{sigma}
\end{equation}

The proof of Proposition \ref{sigmapos} requires the following proposition and notation. For any $e\in E$ and any $k$, we call $e_{\star k}$ the vector whose components coincide with those of $e$ except for component $k$ which is reversed:
$$e_{\star k}(\ell)=\left\{\begin{array}{ll}
e(\ell) & \ell\neq k\\ -e(\ell) & \ell=k
\end{array}\right.$$

\begin{Prop}\label{return1}
Let $e\in E$ be such that for any $k\in\II[-K,0\II]$, $e(k)=1$. Then,
$$\mathbb{P}_e(\tau(e)<+\infty,X_1(0)=\ldots=X_{\tau(e)}(0)=1)>0.$$
\end{Prop}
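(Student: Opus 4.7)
The plan is to exhibit a deterministic return path of positive probability. If every innovation $X_m(0)$ takes the value $+1$ (a scenario with probability $p^n$ over $n$ steps), then by \eqref{1.0.1} the chain evolves deterministically via $X_n = T^n(X_0)$, where $T:E\to E$ is the map $T(e)=e^+$ from \eqref{efwd}. It therefore suffices to show that $T^N(e)=e$ for some finite $N$.

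First, the subset $E_0=\{f\in E:f(k)=1\text{ for all }k\in\II[-K,0\II]\}$, to which $e$ belongs, is preserved by $T$. Indeed, $e^+(0)=1$ by construction, and by induction on $k\geq1$, the backward rule gives $e^+(-k)=e^+(-k+1)\,e(-k+1)=1\cdot1=1$ whenever $e\in E_0$.

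Second, $T|_{E_0}$ is a bijection. The relation $e^+(k)=e^+(k-1)\,e(k)$ (for $k\geq1$) inverts to $e(k)=e^+(k)\,e^+(k-1)$, so the downward components $e(1),\ldots,e(K)$ are uniquely determined by $e^+$; the upward components are forced to $1$ by membership in $E_0$. Hence $T|_{E_0}$ is an injection from the finite set $E_0$ into itself and therefore a bijection, so every orbit is periodic. In particular there exists $N\leq|E_0|=2^K$ with $T^N(e)=e$.

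Putting the pieces together, on the event $\{X_1(0)=\cdots=X_N(0)=1\}$, which has probability $p^N>0$, we have $X_m=T^m(e)$ for $m=1,\ldots,N$, so $X_N=e$ and hence $\tau(e)\leq N$ together with $X_1(0)=\cdots=X_{\tau(e)}(0)=1$, which gives the claim. The only step requiring care is the reduction to $E_0$: on the full state space $E$ the map $T$ is not surjective (every image satisfies $e^+(0)=1$), so there is no a priori reason $e$ would lie in a $T$-periodic orbit; the hypothesis $e(k)=1$ for $k\in\II[-K,0\II]$ is precisely what is needed to invert $T$.
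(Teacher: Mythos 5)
Your proof is correct, and it takes a genuinely different route from the paper's. Both arguments rest on the same starting observation — that on the all-ones innovation event the chain evolves deterministically by $e\mapsto e^+$, an event of probability $p^N$ over $N$ steps — but they certify the return to $e$ differently. The paper argues by induction on $K$: the reduced chain on components $(-K+1,\ldots,K-1)$ returns to its starting point after finitely many all-ones steps, the full chain then sits at either $e$ or $e_{\star K}$, and in the latter case running the same block of transitions once more flips component $K$ back (the sign applied to the top component is the same in both runs because it is determined by the lower components and the innovations). You instead restrict the deterministic map $T(e)=e^+$ to the $T$-invariant set $E_0=\{f:f(k)=1,\ k\in\II[-K,0\II]\}$, show it is injective there (hence a bijection of a finite set), and conclude by pigeonhole that the orbit of $e$ is periodic with period $N\leq 2^K$. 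Your version is arguably cleaner and makes transparent exactly where the hypothesis on the upward components enters — it is what makes $T$ invertible, since on all of $E$ the map is neither surjective (images have $e^+(0)=1$) nor injective ($e(-K)$ is not recoverable from $e^+$). What the paper's induction buys in exchange is a more constructive description of the return path, component by component, which matches the way Proposition \ref{return1} is subsequently exploited in the proof of Proposition \ref{sigmapos}; both approaches yield comparable exponential bounds on the deterministic return time.
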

\begin{proof}
We proceed by induction on $K$. To this end we index the chain itself by $K$ and write $X^{(K)}$ etc. The statement is clearly true for $K=1$ in which case $\mathbb{P}(\tau(e)=1)=p$. Assume it true up to $K-1$. Let $e\in E$ and $e'$ be its projection on components $(-K+1,\ldots,K-1)$. Using the induction assumption, we know that there is a finite number of steps with $X_n^{(K-1)}(k)=1$, for any $k\in\II[-K+1,0\II]$, that take the reduced chain $X^{(K-1)}$ from $e'$ to itself. For the same transitions, the original chain $X^{(K)}$ started from $e$ must end up at either $e$ or $e_{\star K}$. In the latter case, applying the same sequence of transition to $X^{(K)}$ now at $e_{\star K}$ must lead to $e$, which completes the proof.
\end{proof}

\begin{Prop}\label{sigmapos}
For $a\neq0$, $\sigma^2>0$.
\end{Prop}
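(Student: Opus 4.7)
The plan is a proof by contradiction. Suppose $\sigma^2=0$ while $a\neq 0$. Since $\pi(e)>0$ for every $e\in E$ (Proposition \ref{prop1.1} and the finiteness of $E$), the representation \eqref{sigma} forces
$$\sum_{n=1}^{\tau(e)}\bar f(X_n)=0\qquad\mathbb P_e\text{-a.s., for every }e\in E,$$
equivalently $\sum_{j=1}^m\bar f(e_j)=0$ along every positive-probability cycle $e_0\to e_1\to\cdots\to e_m=e_0$. I will turn this into linear constraints on $a$ and conclude $a=0$. The first constraint is immediate: since $e^+=\mathbf 1$ when $e=\mathbf 1$, the self-loop $\mathbf 1\to\mathbf 1$ has probability $p>0$, giving $\bar f(\mathbf 1)=0$, i.e.\ $\sum_{k=-K}^0 a_k(1-\mu(k))+\sum_{k=1}^K a_k=0$.

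Next I handle the coefficients $a_1,\ldots,a_K$ using Proposition \ref{return1}. For each of the $2^K$ states $e$ with $e(-K)=\cdots=e(0)=1$, Proposition \ref{return1} supplies a positive-probability cycle from $e$ to $e$ along which $X_n(0)\equiv 1$. The recursion $X_{n+1}(-k)=X_{n+1}(-k+1)X_n(-k+1)$ propagates $1$'s from the $0$-th coordinate upward, so every upward coordinate $X_n(k)$, $k\leq 0$, remains at $1$ throughout the cycle, and the cycle-sum simplifies to
$$L(e)\sum_{k=-K}^0 a_k(1-\mu(k))+\sum_{k=1}^K a_k\sum_n X_n(k)=0,$$
where $L(e)$ is the cycle length. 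Subtracting $L(e)\bar f(\mathbf 1)=0$ yields $\sum_{k=1}^K a_k\sum_n(X_n(k)-1)=0$ for every admissible initial configuration. Under the constraint $X_n(0)\equiv 1$, the downward dynamics reduces to the deterministic recursion $X_{n+1}(k)=X_{n+1}(k-1)X_n(k)$, whose $2^K$ periodic orbits are indexed by $(e(1),\ldots,e(K))\in\{-1,1\}^K$. A combinatorial check based on the triangular three-dot pattern of Figure \ref{3dot} shows that the vectors $\bigl(\sum_n(X_n(k)-1)\bigr)_{k=1}^K$ produced by these orbits span $\mathbb R^K$, so $a_1=\cdots=a_K=0$.

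With the downward coefficients cleared, $\bar f$ depends only on the upward coordinates and reduces to $\bar f(e)=\sum_{k=-K}^0 a_k(e(k)-\mu(k))$. Running the same cycle argument on the sub-chain $(\widehat X_n)_n$, irreducible by Proposition \ref{prop1.1} and reaching stationarity in $K+1$ steps by Proposition \ref{irreducible}, produces further linear constraints; alternatively, one can invoke the non-degeneracy of the covariance matrix of $(\Xi(-K),\ldots,\Xi(0))$ under $\widehat\pi$ guaranteed by Proposition \ref{pidist}(4) to see directly that $\sigma^2$ cannot vanish for a non-trivial linear combination of the $\Xi(-k)$'s. Either way, we conclude $a_{-K}=\cdots=a_0=0$, contradicting $a\neq 0$. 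The main obstacle is the spanning claim for the downward orbits: disentangling the linear contributions of the $X_n(k)$, $k\geq 1$, across the $2^K$ admissible configurations is where the combinatorics of the three-dot system genuinely enters, and a clean way to handle it may be to exhibit, for each $k\in\II[1,K\II]$, a pair of configurations differing only in the sign of one particular $X_\cdot(k)$ and whose corresponding equations isolate $a_k$.
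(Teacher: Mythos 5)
Your overall skeleton --- contradiction, the regeneration identity \eqref{sigma} forcing $\sum_{n=1}^{\tau(e)}\bar f(X_n)=0$ along every positive-probability cycle, and the self-loop at $\mathbf 1$ --- is exactly the paper's, but both of the substantive steps are left open. For the downward coefficients you reduce matters to the claim that the orbit vectors $\bigl(\sum_n(X_n(k)-1)\bigr)_{k=1}^K$ span $\mathbb{R}^K$, and you yourself flag this as the main obstacle without proving it. The paper closes precisely this gap by ordering the eliminations: it takes $e=\mathbf 1_{\star k}$ for $k=K,K-1,\ldots$; under all-ones innovations this state reproduces the $-1$ in position $k$ at every step while the only coordinates that move are $k+1,\ldots,K$, which by induction already carry zero coefficients, so $\bar f$ is constant along the cycle and comparing with the equation at $\mathbf 1$ isolates $a_k$ directly. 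This is essentially the ``clean way'' you gesture at in your last sentence, but it requires the induction from $K$ downwards: two configurations differing only in coordinate $k$ have orbits differing in all coordinates $k,\ldots,K$, so no single pair isolates $a_k$ before $a_{k+1},\ldots,a_K$ have been cleared.

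The second gap is more serious. For the upward coefficients you propose to ``invoke the non-degeneracy of the covariance matrix of $(\Xi(-K),\ldots,\Xi(0))$'' to conclude directly that $\sigma^2>0$. This does not work: Proposition \ref{pidist}(4) gives positivity of the pairwise covariances, not invertibility of the covariance matrix, and --- more fundamentally --- $\sigma^2$ is the \emph{asymptotic} variance $\var_\pi(f(X_0))+2\sum_n\cov_\pi\big(f(X_0),f(X_n)\big)$, which can vanish even when $\var_\pi(f(X_0))>0$ (the coboundary case $f=g-Pg$); ruling that out is the whole content of the proposition, so no statement about the one-step covariance alone can settle it. Your fallback, ``the same cycle argument on $(\widehat X_n)_n$'', is the right idea but is genuinely harder here: for $k<0$ the state $\mathbf 1_{\star k}$ is \emph{not} fixed by all-ones innovations (the recursion $X_{n+1}(-j)=X_{n+1}(-j+1)X_n(-j+1)$ erases the $-1$ at position $k$ and pushes a $-1$ outward), so a different cycle must be constructed. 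The paper does this in \eqref{xireprev}: a $2^h$-step return to $\widehat{\mathbf 1_{\star k}}$ along which $X_n(k)\equiv-1$, whose existence rests on the combinatorial identity $\nu_{\kappa,2^h}=0$. Without an argument of this kind your proof does not reach the coefficients $a_{-K},\ldots,a_{-1}$.
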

\begin{proof}
Note that the term on the right hand side of \eqref{sigma} is in fact independent of the choice of the state $e$. We proceed by contradiction. That is, let $f$ be such that $\sigma^2=0$, we show that $f$ must be identically nil. Indeed in this case the random variable $\sum_{n=1}^{\tau_e}\bar{f}(X_n)$ must be $\mathbb{P}_e$ almost surely nil.
Taking $e=\mathbf{1}$, for which $\mathbb{P}_e(\tau(e)=1)=p>0$, yields that $\bar{f}(\mathbf{1})=0$ or equivalently that $\sum_{\ell=-K}^0a_k\big(1-\mu(k)\big)+\sum_{\ell=1}^Ka_k=0$.
Repeating the previous argument with $e=\mathbf{1}_{\star K}$, for which we have again $\mathbb{P}_e(\tau(e)=1)=p>0$, yields that $\sum_{\ell=-K}^0a_k\big(1-\mu(k)\big)+\sum_{\ell=1}^{K-1}a_k-a_K=0$. Combining both equations gives that $a_K$ must be nil.
We proceed using an induction argument. Suppose we have established that $a_\ell=0$ for $\ell\in\{k+1,\ldots,K\}$. Choose $e=\mathbf{1}_{\star k}$. Then Proposition \ref{return1} proves the existence of a finite sequence of transitions that take the chain from $e$ (with $e(k)=1$, for $k\in\II[-K,0\II]$) back to $e$ using ones on the first component. This is represented in Figure \ref{3dot1}.

\begin{figure}[h]
\centering
\includegraphics[width=.75\textwidth]{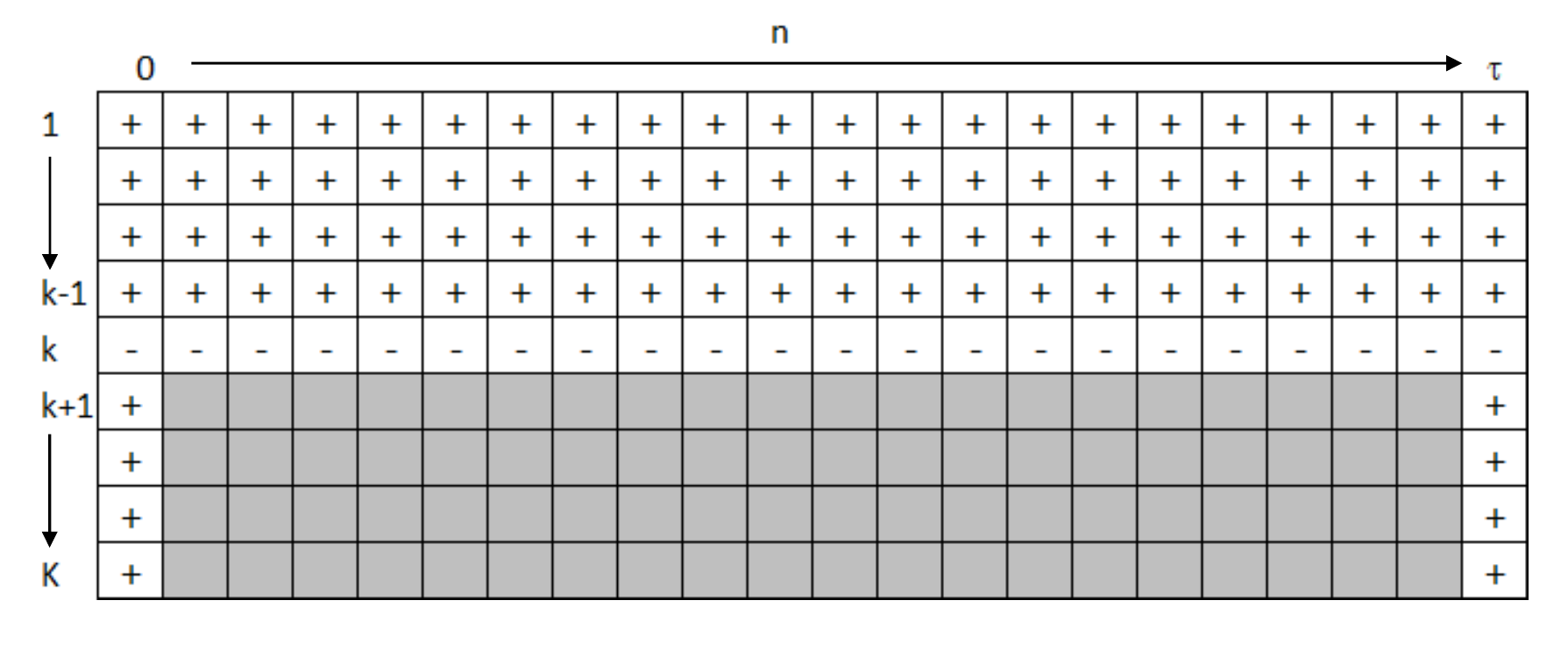}
\centering
\caption{From $\mathbf{1}_{\star k}$ to $\mathbf{1}_{\star k}$ using 1's. ``-'' for $-1$, ``+'' for $1$ and grey cells can be either.}\label{3dot1}
\end{figure}

Writing $\sum_{n=1}^{\tau_e}\bar{f}(X_n)$ is nil on the set of positive probability $\Lambda_e=\{\tau_e<+\infty,X_1(0)=\ldots=X_{\tau_e}(0)=1\}$, yields $\sum_{\ell=-K}^0a_k\big(1-\mu(k)\big)+\sum_{\ell=1}^{k-1}a_\ell-a_k=0$, where we have used the induction assumption that $a_{k+1}=\ldots=a_K=0$ and the fact that $\bar{f}(X_1)=\ldots=\bar{f}(X_{\tau_e})$ on $\Lambda_e$. Combining this equation with that obtained from applying the reasoning to $e=\mathbf{1}$ gives $a_k=0$. This can be repeated for any $k\in\II[0,K\II]$.

The next step is to look at $k\in\II[-K,-1\II]$. To this end we write a variation of \eqref{xirep} that takes row $k$ as reference and proceeds downward:
$$X_n(k+j) = \left(\prod_{\ell=0}^{j-1}X_0(k-\ell+j)^{\alpha_{\ell,n}}\right)\left(\prod_{m=1}^nX_{n-m+1}(k)^{\nu_{j,m}}\right),\quad 1\leq j\leq-k.$$
This enables to show that, with $e=\mathbf{1}_{\star k}$ and $h\in\N$ such that $2^{h-1}\leq-k\leq2^h-1$,
\begin{equation}
\mathbb{P}_e(\widehat{X}_{2^h}=\widehat{e},\widehat{X}_1(k)=-1,\ldots,\widehat{X}_{2^h-1}(k)=-1)>0.\label{xireprev}
\end{equation}
For example, for $k=-1$, taking $\widehat{X}_1(0)=-1$ and $\widehat{X}_2(0)=1$ yields a return to $\widehat{\mathbf{1}_{\star k}}$ after exactly 2 steps such that $\widehat{X}_0(-1)=\widehat{X}_1(-1)=\widehat{X}_2(-1)=-1$.

To show \eqref{xireprev}, all we need to do is show that the $(\widehat{X}_n)_n$ returns to $\widehat{\mathbf{1}_{\star k}}$ after precisely $2^h$ steps and conditional on $\{\widehat{X}_1(k)=-1,\ldots,\widehat{X}_{2^h-1}(k)=-1\}$. Indeed, for $1\leq j\leq-k$,
$$\widehat{X}_{2^h}(k+j) = \left(\prod_{\ell=0}^{j-1}{\underbrace{\widehat{X}_0(k-\ell+j)}_{=1}}^{\alpha_{\ell,2^h}}\right)\left(\prod_{m=1}^{2^h}{\underbrace{\widehat{X}_{n-m+1}(k)}_{=-1}}^{\nu_{j,m}}\right)
= (-1)^{\nu_{j+1,2^h}} = 1,$$
where the last equality follows from the fact, for any $\kappa\in\II[0,2^h\II]$, $\nu_{\kappa,2^h}=0$ (see Proposition 18 of \cite{CHS15}).

As for the case $k\geq0$, we proceed by induction. Recall that $a_k=0$ for any $k\geq0$. Taking $e=\mathbf{1}$ and $e=\mathbf{1}_{\star(-1)}$, we get that $\sum_{\ell=-K}^{-1}a_k\big(1-\mu(k)\big)=0$ and $\sum_{\ell=-K}^{-2}a_k\big(1-\mu(k)\big)+a_{-1}(-1-\mu(-1))=0$, from which we deduce that $a_{-1}=0$. This is then repeated to show that $a_k=0$ for any $k\in\II[-K,K\II]$, proving the result.
\end{proof}

The final step in proving Theorem \ref{TH2} is to untangle the one-dimensional convergence of $\bar{S}_n(t)/(\sigma\sqrt{n})$ into a $(2K+1)$-dimensional convergence of $U_n(t)$. This requires a closer look at, $\sigma^2$, the asymptotic variance of $\bar{S}_n(t)/\sqrt{n}$.

\begin{Prop}\label{variance2}
\begin{enumerate}
\item $\mathbb{E}_{\pi}[f(X_0)]=\sum_{k=0}^Ka_{-k}(2p-1)^{2^{\theta(k)}}$ and
\begin{eqnarray*}
\var_\pi(f(X_0)) & = & \sum_{k=-K}^0\left(1-(2p-1)^{2^{\theta(-k)+1}}\right)a_k^2+\sum_{k=1}^Ka_k^2\\
& &\quad +\ 2\sum_{k=-K+1}^0\sum_{\ell=-K}^{k-1}\left((2p-1)^{B(-k,-\ell)}-(2p-1)^{2^{\theta(-k)}+2^{\theta(-\ell)}}\right)a_ka_\ell.
\end{eqnarray*}
\item Let $n\in\N$ and suppose $2^{\kappa(n)}$ is the largest power of 2 that divides it. Let, for $k,\ell\geq0$ and $n\leq\ell$,
$$\mathcal{B}(k,\ell,n) = 2^{\theta(k)} + 2^{\theta(\ell)} - 2\sum_{j=0}^{k\wedge(-n+\ell)}\beta_{k,j}\beta_{\ell,n+j}.$$
Then,
\begin{eqnarray*}
\lefteqn{\cov_{\pi}(f(X_0),f(X_n))}\\
& = & \sum_{k=-K}^0\sum_{\ell=-K}^{-n}\left((2p-1)^{\mathcal{B}(-k,-\ell,n)}-(2p-1)^{2^{\theta(-k)}+2^{\theta(-\ell)}}\right)a_ka_\ell\\
& &\quad +\ \sum_{k=1}^{K\wedge2^{\kappa(n)}}(2p-1)^{\omega_{k,n}}a_k^2,
\end{eqnarray*}
where the first term on the right hand side (double sum) is nil if $n\geq K+1$.
\item $\bds\sigma^2=\sum_{k,\ell=-K}^K\Sigma(k,\ell)a_ka_\ell\eds$.
\item For any $k\in\II[-K,K\II]$, $\Sigma(k,k)>0$.
\end{enumerate}
\end{Prop}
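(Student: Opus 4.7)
My plan follows the four parts of the proposition in order.

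Part 1 is a direct application of Proposition \ref{pidist}. For the mean, part 1 of that proposition gives $\mathbb{E}_\pi[\Xi(k)] = 0$ for $k \geq 1$, while part 3 gives $\mathbb{E}_\pi[\Xi(-k)] = (2p-1)^{2^{\theta(k)}} = \mu(-k)$ for $k \in \II[0,K\II]$; summing with coefficients $a_k$ yields the claimed formula. For the variance, I expand $\var_\pi(f(X_0))$ as a double sum of covariances. The independence assertions in part 1 of Proposition \ref{pidist} kill every cross-term involving at least one index $k \geq 1$ except the diagonal, which contributes $a_k^2\var(\Xi(k)) = a_k^2$. The diagonal terms with $k \leq 0$ give $a_k^2(1-\mu(k)^2) = a_k^2\bigl(1-(2p-1)^{2^{\theta(-k)+1}}\bigr)$, and for $k < \ell \leq 0$ part 4 of Proposition \ref{pidist} supplies $\cov(\Xi(k),\Xi(\ell)) = (2p-1)^{B(-k,-\ell)} - (2p-1)^{2^{\theta(-k)}+2^{\theta(-\ell)}}$.

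Part 2 is the combinatorial core. Set $\xi_m := X_m(0)$ and note that under $\pi$ the doubly-infinite sequence $(\xi_m)_{m\in\mathbb{Z}}$ is i.i.d.\ Bernoulli$(p)$ and is, by part 1 of Proposition \ref{pidist}, independent of the symmetric Bernoullis $X_0(1),\ldots,X_0(K)$. Extending \eqref{xirep} to stationarity, one obtains $X_n(-k) = \prod_{j=0}^{k}\xi_{n-j}^{\beta_{k,j}}$ for $k\geq0$, and $X_n(k) = \prod_{j=1}^{k}X_0(j)^{\alpha_{k-j,n}}\prod_{m=1}^{n}\xi_{n-m+1}^{\nu_{k,m}}$ for $k\geq1$. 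Since $\xi_m^2=1$ and every elementary factor is either centred or $p$-biased, $\cov_\pi(X_0(i),X_n(j))$ reduces to the question of which indices carry odd total exponent. For $i = -k$, $j = -\ell$ with $k,\ell\geq0$, the remaining factor is $(2p-1)$ raised to the number of $\xi$-indices appearing an odd number of times, which is precisely $2^{\theta(k)}+2^{\theta(\ell)} - 2\sum_{j=0}^{k\wedge(\ell-n)}\beta_{k,j}\beta_{\ell,n+j} = \mathcal{B}(k,\ell,n)$; subtracting $\mu(i)\mu(j)$ yields the first (double) sum of the statement, which vanishes as soon as $n \geq \ell+1$, hence whenever $n \geq K+1$. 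Mixed upward/downward covariances vanish because the upward representation always carries an isolated symmetric factor $X_0(j)$, $j\geq 1$, independent of everything else. For $i = k$, $j = \ell$ with $k,\ell\geq 1$, the same parity analysis forces $k = \ell$ together with $\nu_{2,n}=\cdots=\nu_{k,n}=0$; by the Lucas--Kummer computation of $\nu$ performed in Proposition 18 of \cite{CHS15}, this is equivalent to $k\leq 2^{\kappa(n)}$, and what remains is $\mathbb{E}\bigl[\prod_{m=1}^{n}\xi_{n-m+1}^{\nu_{k,m}}\bigr]=(2p-1)^{\omega_{k,n}}$.

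Part 3 is then a rearrangement of $\sigma^2 = \var_\pi(f(X_0)) + 2\sum_{n\geq1}\cov_\pi(f(X_0),f(X_n))$ using parts 1 and 2. For $k = \ell \geq 1$ the upward diagonal collects cleanly into $a_k^2\bigl(1+2\sum_{n:\,k\leq 2^{\kappa(n)}}(2p-1)^{\omega_{k,n}}\bigr) = \Sigma(k,k)a_k^2$. For $k,\ell\leq 0$, one combines the $n=0$ term $(2p-1)^{B(-k,-\ell)}$ coming from the variance block (using $B(j,j)=0$ so that $(2p-1)^{B(j,j)}=1$ when $k=\ell$) with the two tails $\sum_{n=1}^{-\ell}(2p-1)^{\mathcal{B}(-k,-\ell,n)}$ and $\sum_{n=1}^{-k}(2p-1)^{\mathcal{B}(-\ell,-k,n)}$, which arise because, at fixed $n\geq 1$, $\cov(X_0(k),X_n(\ell))$ and $\cov(X_0(\ell),X_n(k))$ are supported on different ranges of $n$; the $-k-\ell+1$ copies of $(2p-1)^{2^{\theta(-k)}+2^{\theta(-\ell)}}$ are then subtracted. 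All remaining pairs contribute zero, matching the ``otherwise'' branch of $\Sigma$. Part 4 is immediate: applying Proposition \ref{sigmapos} with $a$ equal to the $k$-th coordinate vector (which is nonzero) gives $\Sigma(k,k) = \sigma^2 > 0$.

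The main obstacle is the parity count in Part 2. The delicate points are, first, the identification of the overlap of odd-exponent $\xi$-indices with $\sum_{j=0}^{k\wedge(\ell-n)}\beta_{k,j}\beta_{\ell,n+j}$, which requires carefully tracking how the binomial indicator $\beta_{k,\cdot}$ aligns with the time-shifted indicator $\beta_{\ell,n+\cdot}$; and second, the auxiliary equivalence ``$\nu_{j,n}=0$ for every $j\in\II[2,k\II]$ iff $k\leq 2^{\kappa(n)}$,'' which must be deduced cleanly from the base-$2$ structure of $\nu_{k,n}=\binom{n+k-2}{k-1}\bmod 2$ by appealing to the analysis in \cite{CHS15}.
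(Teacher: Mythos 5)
Your proposal is correct and follows essentially the same route as the paper: part 1 from Proposition \ref{pidist}, part 2 by a sign-based case analysis of $\cov_\pi(X_0(k),X_n(\ell))$ using the representation \eqref{xirep} together with the independence and symmetry of the $X_0(j)$, $j\geq1$, and parts 3--4 by substitution and Proposition \ref{sigmapos}. The only cosmetic differences are that you invoke a two-sided stationary extension of the innovations where the paper instead shifts time by $K+1$ steps (using that $(\widehat{X}_n)_n$ mixes perfectly), and that you cite Proposition 18 of \cite{CHS15} for the condition $\alpha_{j,n}=0$, $j\in\II[1,k-1\II]$, where the paper uses its own Lemma \ref{alpha1}.
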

\begin{proof}
\begin{enumerate}
\item This is a direct consequence of Proposition \ref{pidist}.
\item The result immediately follows from the computation of $\cov_{\pi}(X_0(k),X_n(\ell))$, for any $k,\ell\in\II[-K,K\II]$:
\begin{eqnarray*}
\cov_{\pi}(f(X_0),f(X_n)) & = & \sum_{k,\ell=-K}^Ka_ka_\ell\cov_\pi\big(X_0(k),X_n(\ell)\big)\\
& = & \sum_{k,\ell=-K}^Ka_ka_\ell\Big(\mathbb{E}_{\pi}[X_0(k)X_n(\ell)]-\mu_k\mu_\ell\Big).
\end{eqnarray*}

We start with the case $k\in\II[1,K\II]$ and $\ell\in\II[-K,0\II]$. Using representation \eqref{xirep}, we see that $X_n(\ell)$ is a function of $(X_0(-\ell),\ldots,X_0(0),X_1(0),\ldots,X_n(0))$. Therefore, $X_0(k)$ and $X_n(\ell)$ are independent.

The case $k\in\II[-K,0\II]$ and $\ell\in\II[1,K\II]$ is similar. First, we observe that $X_n(\ell)$ is a function of $(X_0(1),\ldots,X_0(\ell),X_1(0),\ldots,X_n(0))$, and is therefore again independent of $X_0(k)$.

In both of these cases, $\cov_\pi(X_0(k),X_n(\ell))=0$.

Next, we look at the case $k,\ell\in\II[1,K\II]$. Using representation \eqref{xirep} and the independence of $X_0,X_1(0),X_2(0),\ldots$, we write
\begin{eqnarray*}
\mathbb{E}_{\pi}[X_0(k)X_n(\ell)] & = & \mathbb{E}_{\pi}\left[X_0(k)\left(\prod_{j=0}^{\ell-1}X_0(\ell-j)^{\alpha_{j,n}}\right)\left(\prod_{m=1}^nX_{n-m+1}(0)^{\nu_{\ell,m}}\right)\right]\\
& = & \mathbb{E}_{\pi}\left[X_0(k)\left(\prod_{j=0}^{\ell-1}X_0(\ell-j)^{\alpha_{j,n}}\right)\right]\left(\prod_{m=1}^n\mathbb{E}_{\pi}[X_{n-m+1}(0)^{\nu_{\ell,m}}]\right)
\end{eqnarray*}
which, in the case $k<\ell$, yields to $\mathbb{E}_{\pi}[X_0(k)X_n(\ell)]=0$ in view of the fact $\alpha_{0,n}=1$ and, for $\ell\geq1$, $X_0(\ell)$ is symmetric. The case $k>\ell$ is dealt with in exactly the same way yielding again $\mathbb{E}_{\pi}[X_0(k)X_n(\ell)]=0$. Finally, we look at the case $k=\ell\geq1$:
$$\mathbb{E}_{\pi}[X_0(k)X_n(k)] = \left(\prod_{j=1}^{k-1}\mathbb{E}_{\pi}[X_0(k-j)^{\alpha_{j,n}}]\right)\left(\prod_{m=1}^n\mathbb{E}_{\pi}[X_{n-m+1}(0)^{\nu_{k,m}}]\right).$$
Since the $X_0(k-j)$ are symmetric, the first product takes the value 1 if $\alpha_{j,n}=0$ for any $j\in\II[1,k-1\II]$, and takes the value 0 otherwise. Using Lemma \ref{alpha1} of the Appendix, we see that, letting $2^\kappa$ be the largest power of 2 that divides $n$, if $k\leq 2^\kappa$, then the aforesaid product is 1, and if $k>2^\kappa$, then the product is 0. The second product equals $(2p-1)^{\omega_{k,n}}$.

The last case is $k,\ell\in\II[-K,0\II]$. Here, our focus is the upward process $(\widehat{X}_n)_n$, which reaches stationarity after $K+1$ steps (see Proposition \ref{irreducible}):
\begin{eqnarray*}
\lefteqn{\mathbb{E}_{\pi}[X_0(k)X_n(\ell)]\ =\ \mathbb{E}_{\widehat{\pi}}[\widehat{X}_0(k)\widehat{X}_n(\ell)]\ =\ \mathbb{E}[\widehat{X}_{K+1}(k)\widehat{X}_{n+K+1}(\ell)]}\\
& = & \mathbb{E}\left[\left(\prod_{j=0}^{-k}X_{K+1-j}(0)^{\beta_{-k,j}}\right)\left(\prod_{j=0}^{-\ell}X_{n+K+1-j}(0)^{\beta_{-\ell,j}}\right)\right]\\
& = & \mathbb{E}\left[\left(\prod_{m=K+1+k}^{K+1}X_m(0)^{\beta_{-k,K+1-m}}\right)\left(\prod_{m=n+K+1+\ell}^{n+K+1}X_m(0)^{\beta_{-\ell,n+K+1-m}}\right)\right].
\end{eqnarray*}
Clearly, in the case $n+\ell\geq1$, the two products are independent and we have once more $\cov_\pi(X_0(k),X_n(\ell))=0$. If $n\leq-\ell$, then we can rearrange the above products in the 3 independent products the means of which are: $\prod_{j\in I_1\setminus I_2}(2p-1)^{\beta_{-k,j}}$, $\prod_{j\in I_1\cap I_2}(2p-1)^{\beta_{-k,j}+\beta_{-\ell,n+j}-2\beta_{-k,j}\beta_{-\ell,n+j}}$ and $\prod_{j\in I_2\setminus I_1}(2p-1)^{\beta_{-\ell,n+j}}$,
where $I_1=\II[0,-k\II]$ and $I_2=\II[-n,-n-\ell\II]$. However, for $-k\leq-n-\ell$,
\begin{eqnarray*}
\lefteqn{\sum_{j\in I_1\setminus I_2}\beta_{-k,j} + \sum_{j\in I_1\cap I_2}\big(\beta_{-k,j}+\beta_{-\ell,n+j}-2\beta_{-k,j}\beta_{-\ell,n+j}\big) + \sum_{j\in I_2\setminus I_1}\beta_{-\ell,n+j}}\\
& = & \sum_{j=0}^{-k}\beta_{-k,j} + \sum_{j=0}^{-k}\beta_{-\ell,n+j} - 2\sum_{j=0}^{-k}\beta_{-k,j}\beta_{-\ell,n+j} + \sum_{j=-n}^{-1}\beta_{-\ell,n+j} + \sum_{j=-k+1}^{-n-\ell}\beta_{-\ell,n+j}\\
& = & 2^{\theta(-k)} + 2^{\theta(-\ell)} - 2\sum_{j=0}^{-k}\beta_{-k,j}\beta_{-\ell,n+j}.
\end{eqnarray*}
Similarly, if $-k>-n-\ell$,
\begin{eqnarray*}
\lefteqn{\sum_{j\in I_1\setminus I_2}\beta_{-k,j} + \sum_{j\in I_1\cap I_2}\big(\beta_{-k,j}+\beta_{-\ell,n+j}-2\beta_{-k,j}\beta_{-\ell,n+j}\big) + \sum_{j\in I_2\setminus I_1}\beta_{-\ell,n+j}}\\
& = & \sum_{j=-n-\ell+1}^{-k}\beta_{-k,j} + \sum_{j=0}^{-n-\ell}\beta_{-k,j} + \sum_{j=0}^{-n-\ell}\beta_{-\ell,n+j} - 2\sum_{j=0}^{-n-\ell}\beta_{-k,j}\beta_{-\ell,n+j} + \sum_{j=-n}^{-1}\beta_{-\ell,n+j}\\
& = & 2^{\theta(-k)} + 2^{\theta(-\ell)} - 2\sum_{j=0}^{-n-\ell}\beta_{-k,j}\beta_{-\ell,n+j}.
\end{eqnarray*}

We immediately get that
$$\mathbb{E}_{\pi}[X_0(k)X_n(\ell)] = (2p-1)^{\mathcal{B}(-k,-\ell,n)}.$$

\item Theorem 17.0.1 of \cite{MT93} states that
$$\sigma^2 = \var_\pi(f(X_0))+2\sum_{n=1}^\infty\cov_{\pi}(f(X_0),f(X_n)).$$
Substituting the expressions from 1. and 2. leads to the required representation.

\item Finally, the positivity of $\Sigma(k,k)$ is a direct consequence of Proposition \ref{sigmapos}. Simply take $a_k=1$ and $a_\ell=0$ for $\ell\neq k$.
\end{enumerate}
\end{proof}

\section{Appendix}

\begin{lemma}\label{beta}
Let, for $k=\sum_{i\geq0}k_i2^i$, $\theta(k)=\sum_{i\geq0}k_i$ be the number of the base 2 digits of $k$ that are equal to 1. Then,
$$\#\{j\in\II[0,k\II]:\beta_{k,j}=1\} = 2^{\theta(k)}.$$
\end{lemma}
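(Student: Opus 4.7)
The plan is to apply Lucas' Theorem directly in base $2$. Write $k = \sum_{i \geq 0} k_i 2^i$ and $j = \sum_{i \geq 0} j_i 2^i$ for the binary expansions of $k$ and $j$, with $k_i, j_i \in \{0,1\}$. Lucas' Theorem yields
$$\binom{k}{j} \equiv \prod_{i\geq 0} \binom{k_i}{j_i} \pmod 2.$$
Since each factor $\binom{k_i}{j_i}$ is either $0$ or $1$, we have $\beta_{k,j} = 1$ if and only if $\binom{k_i}{j_i} = 1$ for every $i$, which is equivalent to the digit-wise condition $j_i \leq k_i$ for all $i \geq 0$.

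Next I would count the $j \in \II[0,k\II]$ satisfying $j_i \leq k_i$ for every $i$. For each digit position: if $k_i = 0$, then necessarily $j_i = 0$ (one choice); if $k_i = 1$, then $j_i \in \{0,1\}$ (two choices). Since the number of indices $i$ with $k_i = 1$ is $\theta(k)$, the total count is $2^{\theta(k)}$. Finally, any $j$ satisfying $j_i \leq k_i$ for all $i$ automatically lies in $\II[0,k\II]$, since digit-wise dominance in base $2$ implies $j \leq k$ as integers.

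There is no real obstacle here; the lemma is essentially a bookkeeping consequence of Lucas' Theorem. The only small point worth noting explicitly is that the digit-wise inequality $j_i \leq k_i$ for all $i$ is equivalent to $j \leq k$ combined with the parity condition coming from Lucas, so no range constraint is lost by replacing the sum over $j \in \II[0,k\II]$ by a product over digit positions.
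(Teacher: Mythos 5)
Your proof is correct and follows exactly the paper's argument: invoke Lucas' Theorem to reduce $\beta_{k,j}=1$ to the digit-wise condition $j_i\leq k_i$, then count the admissible $j$ by multiplying the number of choices per digit. The only difference is that you explicitly note digit-wise dominance forces $j\leq k$, a small point the paper leaves implicit.
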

\begin{proof}
By Lucas Theorem, $\beta_{k,j}=1$ if and only if, for every $i\geq0$, $k_i\geq j_i$. This leads to $j_i=0$ whenever $k_i=0$. On the other hand, if $k_i=1$, then $j_i$ can be either 0 or 1. The result immediately follows.
\end{proof}

\begin{lemma}\label{B}
Let, for $k=\sum_{i\geq0}k_i2^i$ and $\ell=\sum_{i\geq0}\ell_i2^i$, $\left<k,\ell\right>=\sum_{i\geq0}k_i\ell_i2^i$. Then,
$$B(k,\ell) = 2^{\theta(k)}+2^{\theta(\ell)}-2\sum_{j=0}^{k\wedge\ell}\beta_{k,j}\beta_{\ell,j} = 2^{\theta(k)}+2^{\theta(\ell)}-2^{\theta(\left<k,\ell\right>)+1}.$$
$\theta(\left<k,\ell\right>)$ is the number of the base 2 digits of $k$ and $\ell$ that are concurrently equal to 1. In particular, $B(k,k)=0$.
\end{lemma}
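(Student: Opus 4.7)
The plan is to prove the two stated equalities in sequence. The first is a bookkeeping identity that rewrites $B(k,\ell)$ by inclusion–exclusion together with Lemma \ref{beta}, and the second is a direct Lucas-based counting argument.

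\emph{First equality.} Since $\beta_{k,j},\beta_{\ell,j}\in\{0,1\}$, the event $\beta_{k,j}+\beta_{\ell,j}=1$ is a symmetric difference, so
$$\#\{j:\beta_{k,j}+\beta_{\ell,j}=1\}=\#\{j:\beta_{k,j}=1\}+\#\{j:\beta_{\ell,j}=1\}-2\,\#\{j:\beta_{k,j}=\beta_{\ell,j}=1\}.$$
Over the range $\II[0,k\vee\ell\II]$, Lemma \ref{beta} identifies the first two terms as $2^{\theta(k)}$ and $2^{\theta(\ell)}$, while the third term equals $\sum_{j=0}^{k\wedge\ell}\beta_{k,j}\beta_{\ell,j}$ since $\beta_{k,j}\beta_{\ell,j}=0$ whenever $j$ exceeds $k\wedge\ell$. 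The definition of $B(k,\ell)$ restricts $j$ to $\II[1,k\vee\ell\II]$, but as $\binom{k}{0}=\binom{\ell}{0}=1$, the index $j=0$ contributes exactly $1$ to each of the three counts, so the exclusions cancel ($-1-1+2=0$).

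\emph{Second equality.} Writing $j=\sum_{i\geq0}j_i2^i$, Lucas's theorem asserts that $\beta_{k,j}=1$ if and only if $j_i\leq k_i$ for every $i$. Hence $\beta_{k,j}\beta_{\ell,j}=1$ if and only if $j_i\leq k_i$ and $j_i\leq \ell_i$ for every $i$, equivalently $j_i\leq k_i\ell_i$ (since all digits are in $\{0,1\}$). Such $j$ are enumerated by choosing $j_i\in\{0,1\}$ freely at positions where $k_i\ell_i=1$ and forcing $j_i=0$ elsewhere, giving $2^{\#\{i:k_i\ell_i=1\}}=2^{\theta(\left<k,\ell\right>)}$ admissible indices, each of which automatically satisfies $j\leq k\wedge\ell$. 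Substituting this count into the first equality yields $B(k,\ell)=2^{\theta(k)}+2^{\theta(\ell)}-2^{\theta(\left<k,\ell\right>)+1}$.

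\emph{Particular case and main obstacle.} For $k=\ell$, $\left<k,k\right>=\sum_{i\geq0}k_i^2 2^i=k$, so $\theta(\left<k,k\right>)=\theta(k)$ and $B(k,k)=2^{\theta(k)}+2^{\theta(k)}-2^{\theta(k)+1}=0$. The argument is essentially combinatorial; the only mildly delicate point is reconciling the summation range $\II[1,k\vee\ell\II]$ in the definition of $B$ with the range $\II[0,k\II]$ appearing in Lemma \ref{beta}, and this is handled uniformly by the observation that $j=0$ contributes the same excess to each term of the symmetric-difference identity.
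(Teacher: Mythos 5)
Your proof is correct and follows essentially the same route as the paper's: both express the count of indices with $\beta_{k,j}+\beta_{\ell,j}=1$ via the symmetric-difference (inclusion--exclusion) identity, evaluate the individual counts with Lucas's theorem as in Lemma \ref{beta}, and identify $\#\{j:\beta_{k,j}=\beta_{\ell,j}=1\}$ with $2^{\theta(\left<k,\ell\right>)}$. The only cosmetic difference is that the paper sums from $j=1$ and carries the $-1$ corrections explicitly, whereas you sum from $j=0$ and note that the $j=0$ contributions cancel.
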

\begin{proof}
Since $\beta_{k,j}+\beta_{\ell,j}=1$ if and only if $\beta_{k,j}+\beta_{\ell,j}-2\beta_{k,j}\beta_{\ell,j}=1$ and the second expression is always in $\{0,1\}$,
\begin{eqnarray*}
B(k,\ell) & = & \#\{j\in\II[1,k\vee\ell\II]:\beta_{k,j}+\beta_{\ell,j}=1\}\ =\ \sum_{j=1}^{k\vee\ell}\left(\beta_{k,j}+\beta_{\ell,j}-2\beta_{k,j}\beta_{\ell,j}\right)\\
& = & \sum_{j=1}^k\beta_{k,j}+\sum_{j=1}^\ell\beta_{\ell,j}-2\sum_{j=1}^{k\wedge\ell}\beta_{k,j}\beta_{\ell,j}\\
& = & \big(2^{\theta(k)}-1\big)+\big(2^{\theta(k)}-1\big)-2\big(2^{\theta(\left<k,\ell\right>)}-1\big).
\end{eqnarray*}
In the last step, we use the fact that $\beta_{k,j}=\beta_{\ell,j}=1$ if and only if $\beta_{\left<k,\ell\right>,j}=1$.
\end{proof}

\begin{lemma}\label{alpha1}
If $2^\kappa$ is the largest power of 2 that divides $n$ (i.e. $n=m2^\kappa$ where $m$ is odd), then $\min\{j\geq1:\ \alpha_{j,n}=1\}=2^\kappa$.
\end{lemma}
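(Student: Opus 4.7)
The plan is to translate the claim into a statement about the 2-adic valuation of a binomial coefficient. Unwinding the definitions in the paper, $\alpha_{j,n} = \nu_{j+1,n} \equiv \binom{n+j-1}{j} \pmod{2}$, so the task reduces to finding the smallest $j \geq 1$ for which $\binom{n+j-1}{j}$ is odd. I would invoke Kummer's theorem, which asserts that this binomial coefficient is odd if and only if the addition $j + (n-1)$ produces no carries when performed in base $2$.

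Next, I would read off the base-$2$ expansion of $n - 1$ from the factorisation $n - 1 = (m-1)\,2^\kappa + (2^\kappa - 1)$ with $m$ odd. This immediately shows that bits $0, 1, \ldots, \kappa - 1$ of $n - 1$ are all equal to $1$, whereas bit $\kappa$ of $n - 1$ equals the least significant bit of $m - 1$, which vanishes since $m$ is odd.

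With this description in hand, verifying the two halves of the lemma is a direct bit-check. For $j = 2^\kappa$, the single set bit of $j$ sits at position $\kappa$, where $n-1$ has a $0$, so no carry occurs; hence $\alpha_{2^\kappa,n} = 1$. Conversely, for $1 \leq j < 2^\kappa$, the lowest set bit $i^\star$ of $j$ lies in $\{0, 1, \ldots, \kappa - 1\}$, precisely the range where the bits of $n-1$ are all $1$, so $j + (n-1)$ carries out of position $i^\star$; hence $\binom{n+j-1}{j}$ is even and $\alpha_{j,n} = 0$. Combining the two cases gives $\min\{j \geq 1 : \alpha_{j,n} = 1\} = 2^\kappa$.

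There is no substantial obstacle here: once Kummer's theorem is invoked the argument collapses to a routine inspection of the binary digits of $n - 1$. A purely Lucas-based alternative would work equally well but is slightly more fiddly because one has to reason about the bits of $n + j - 1$ rather than of $n - 1$; reducing to the carry-count via Kummer keeps the bookkeeping minimal.
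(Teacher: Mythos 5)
Your proof is correct. Structurally it mirrors the paper's argument: both reduce $\alpha_{j,n}=\binom{n+j-1}{j}\bmod 2$ to an inspection of the binary digits of $n-1$, whose bits $0,\dots,\kappa-1$ are all $1$ and whose bit $\kappa$ is $0$, and then treat $j=2^\kappa$ and $1\le j<2^\kappa$ separately. The difference is that you test parity via Kummer's carry criterion for the addition $j+(n-1)$, where the paper applies Lucas's theorem to the digits of $n+j-1$ versus those of $j$; the two criteria are equivalent here, so neither buys extra generality. However, your choice of the \emph{lowest} set bit $i^\star$ of $j$ is the right one, and it sidesteps a slip in the paper's own proof: the paper works with the \emph{highest} set bit $\ell^\star$ and asserts that digit $\ell^\star$ of $n+j-1$ is $0$, which holds only when $j$ is a power of two (for instance $n=4$, $j=3$ gives $n+j-1=6=110_2$, whose bit $1$ is $1$, while the Lucas witness is bit $0$). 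Since the low-order bits of $n+j-1$ are exactly those of $j-1$, the digit that drops to $0$ sits at the lowest set bit of $j$, which is precisely where your carry argument detects the failure. So your write-up is not only correct but slightly more robust than the published one.
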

\begin{proof}
Let $n=\sum_{\ell\geq0}n_\ell2^\ell$ be the binary representation of $n$. If $n=m2^\kappa$, then $n_\ell=0$ for $\ell<\kappa$ and $n_\kappa=1$, and
$$n-1=\sum_{\ell=0}^{\kappa-1}2^\ell+\sum_{\ell\geq\kappa+1}n_\ell2^\ell.$$
Let $j\in\II[1,2^\kappa-1\II]$, $j=\sum_{\ell=0}^{\kappa-1}j_\ell2^\ell$ be its binary representation and $\ell^\star=\max\{\ell\in\II[0,\kappa-1\II]:\ j_\ell=1\}$. Then the $\ell^\star$ digit of $n+j-1$ is 0 while the same digit of $j$ is 1. By Lucas Theorem, $\alpha_{j,n}=0$. On the other hand, all digits of $n+2^\kappa-1$ are at least as large as the same digits of $2^\kappa$. Again, by Lucas Theorem, $\alpha_{2^\kappa,n}=1$.
\end{proof}

\end{document}